\DeclareMathOperator{\con}{con}
\DeclareMathOperator{\simple}{sim}
\DeclareMathOperator{\mul}{mul}
\DeclareMathOperator{\End}{End}
\DeclareMathOperator{\occ}{occ}
\DeclareMathOperator{\var}{var}
\newtheorem{theorem}{Theorem}[section]
\newtheorem{proposition}[theorem]{Proposition}
\newtheorem{lemma}[theorem]{Lemma}
\newtheorem{corollary}[theorem]{Corollary}
\newtheorem{remark}[theorem]{Remark}
\theoremstyle{definition}
\newtheorem{example}[theorem]{Example}
\numberwithin{equation}{section}
\renewcommand*\subjclass[2][2010]{\def\@subjclass{#2}\@ifundefined{subjclassname@#1}{\ClassWarning{\@classname}{Unknown edition (#1) of Mathematics Subject Classification; using '2010'.}}{\@xp\let\@xp\subjclassname\csname subjclassname@#1\endcsname}}
\renewcommand{\subjclassname}{\textup{2010} Mathematics Subject Classification}
\begin{document}

\title[A new example of a limit variety of monoids]{A new example of a limit variety of monoids}

\thanks{The work is supported by the Ministry of Education and Science of the Russian Federation (project 1.6018.2017/8.9) and by Russian Foundation for Basic Research (grant 17-01-00551)}

\author{S.V.Gusev}

\address{Ural Federal University, Institute of Natural Sciences and Mathematics, Lenina 51, 620000 Ekaterinburg, Russia}

\email{sergey.gusb@gmail.com}

\date{}

\begin{abstract}
A variety of universal algebras is called limit if it is non-finitely based but all its proper subvarieties are finitely based. Until recently, only two explicit examples of limit varieties of monoids, constructed by Jackson, were known. Recently Zhang and Luo found the third example of such a variety. In our work, one more example of a limit variety of monoids is given.
\end{abstract}

\keywords{Monoid, variety, limit variety, finite basis problem, finitely based variety, non-finitely based variety}

\subjclass{20M07}

\maketitle

\section{Introduction and summary}
\label{introduction}

A variety of algebras is called \emph{finitely based} if it has a finite basis of its identities, otherwise, the variety is said to be \emph{non-finitely based}. Much attention is paid to the study of finitely based and non-finitely based varieties of algebras of various types. In particular, the finitely based and non-finitely based varieties of semigroups and monoids have been the subject of an intensive research (see the surveys~\cite{Shevrin-Volkov-85,Volkov-01}).

A variety is called a \emph{limit variety} if it is non-finitely based but every proper subvariety
is finitely based. Limit varieties play an important role because each non-finitely based variety contains some limit subvariety. There are continuum many limit varieties of groups~\cite{Kozhevnikov-12}. When studying varieties of semigroups and monoids, the varieties play an important role that are away from group varieties in a sense. We mainly mean varieties of \emph{aperiodic} monoids, i.e., monoids that have trivial subgroups only. A few explicit examples of limit varieties of monoids are known so far, and all these varieties consist of aperiodic monoids. In~\cite{Jackson-05}, Jackson found the first two examples of such varieties $\mathbf J_1$ and $\mathbf J_2$. Lee established that only $\mathbf J_1$ and $\mathbf J_2$ are limit varieties within several classes of monoid varieties~\cite{Lee-09,Lee-12}. In~2013, Zhang found a non-finitely based variety $\mathbf L$ of aperiodic monoids that does not contain the varieties $\mathbf J_1$ and $\mathbf J_2$~\cite{Zhang-13} and, therefore, she proved that there exists a limit variety of monoids that differs from $\mathbf J_1$ and $\mathbf J_2$. Just recently, Zhang and Luo identified an explicit example of such variety~\cite{Zhang-Luo-19+}. In this article we exhibit another example of a limit variety of aperiodic monoids. Note that our limit variety is not contained in the variety $\mathbf L$.

In order to formulate the main result of the article, we need some notation. The free monoid over a countably infinite alphabet is denoted by $F^1$. As usual, elements of $F^1$ and the alphabet are called \emph{words} and \emph{letters} respectively. Words and letters are denoted by small Latin letters. However, words unlike letters are written in bold. Expressions like to $\mathbf u\approx\mathbf v$ are used for identities, whereas $\mathbf{u=v}$ means that the words $\mathbf u$ and $\mathbf v$ coincide. As usual, the symbol $\mathbb N$ stands for the set of all natural numbers. For an arbitrary $n\in\mathbb N$, we denote by $S_n$ the full symmetric group on the set $\{1,2,\dots,n\}$. If $\mathbf \pi\in S_n$ then we put
$$
\mathbf w_n[\pi]= x\, z_{1\pi}z_{2\pi}\cdots z_{n\pi}\, x\, \biggl(\,\prod_{i=1}^n t_iz_i\biggr)\ \text{ and }\ \mathbf w_n'[\pi]=x^2\, z_{1\pi}z_{2\pi}\cdots z_{n\pi}\, \biggl(\,\prod_{i=1}^n t_iz_i\biggr).
$$
We fix notation for the following identity system:
$$
\Phi=\{xyx\approx xyx^2,\,x^2y^2\approx y^2x^2,\, xyzxy\approx yxzxy\}.
$$
For an identity system $\Sigma$, we denote by $\var\,\Sigma$ the variety of monoids given by $\Sigma$. Put
$$
\mathbf J=\var\,\{\Phi,\,xyxztx\approx xyxzxtx,\,\mathbf w_n[\pi]\approx \mathbf w_n'[\pi]\mid n\in \mathbb N,\ \pi\in S_n\}.
$$
A variety is called \emph{finitely generated} if it is generated by a finite algebra.

The main result of the paper is the following

\begin{theorem}
\label{main result}
The variety $\mathbf J$ is a finitely generated limit variety of monoids.
\end{theorem}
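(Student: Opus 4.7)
The theorem asserts three things simultaneously, and my plan attacks each in turn.

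For the finite generation, the strategy is to exhibit an explicit finite aperiodic monoid $M$ and verify that $\mathbf{J}=\var\,M$. A natural candidate is a Rees quotient of the free monoid over a small alphabet modulo the ideal generated by a carefully chosen finite set of factor-forbidden words; the quotient is then aperiodic by construction. The inclusion $M\in\mathbf{J}$ is a direct, identity-by-identity substitution verification. The reverse inclusion, that every identity of $M$ follows from those defining $\mathbf{J}$, would be established by using $\Phi$ together with $xyxztx\approx xyxzxtx$ to reduce an arbitrary word to a canonical shape in which the multiplicities of repeated letters, the ``simple'' block, and the linearly ordered tail-positions are pinned down, then using the $\mathbf{w}_n[\pi]\approx\mathbf{w}_n'[\pi]$ identities to absorb duplicated occurrences of the ``central'' letter, and finally checking that distinct normal forms take distinct values in $M$.

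For non-finite basedness, the standard plan is to construct, for each $n\in\mathbb{N}$, a finite monoid $B_n\in\mathbf{J}$ (obtained by a parallel Rees-quotient construction keyed to the index $n$) that satisfies $\Phi$, the identity $xyxztx\approx xyxzxtx$, and every identity $\mathbf{w}_m[\pi]\approx\mathbf{w}_m'[\pi]$ for $m<n$, but refutes $\mathbf{w}_n[\sigma]\approx\mathbf{w}_n'[\sigma]$ for a well-chosen $\sigma\in S_n$. The sequence $\{B_n\}_{n\in\mathbb{N}}$ then witnesses that no finite subfamily of the listed axioms suffices, so $\mathbf{J}$ is non-finitely based.

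The genuinely hard part is the limit property: every proper subvariety $\mathbf{V}\subsetneq\mathbf{J}$ must be shown finitely based. My plan is to pick an identity $\mathbf{u}\approx\mathbf{v}$ that holds in $\mathbf{J}$ but fails in $\mathbf{V}$, reduce $\mathbf{u},\mathbf{v}$ to the canonical shape produced by $\Phi$ and $xyxztx\approx xyxzxtx$, and read off from the failure a bound $N$ such that $\mathbf{V}$ satisfies some identity which, together with $\Phi$, with $xyxztx\approx xyxzxtx$, and with $\mathbf{w}_m[\pi]\approx\mathbf{w}_m'[\pi]$ for $m\leq N$ and $\pi\in S_m$, already entails $\mathbf{w}_n[\pi]\approx\mathbf{w}_n'[\pi]$ for every $n>N$ and every $\pi\in S_n$. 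This truncation of the infinite family into a finite equivalent fragment is what delivers the finite basis for $\mathbf{V}$.

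The main obstacle will be precisely this last truncation: showing that a single refuted identity in a proper subvariety $\mathbf{V}$ is syntactically strong enough to derive all but finitely many members of the family $\{\mathbf{w}_n[\pi]\approx\mathbf{w}_n'[\pi]\}$ inside $\mathbf{V}$. I expect the proof to require a careful induction on $n$ that exploits the commutation of squares from $\Phi$ and the positional absorption provided by $xyxztx\approx xyxzxtx$, together with a detailed analysis of where the letter $x$ may be inserted between the $z_{i\pi}$'s and the $t_iz_i$-pairs. The specific combinatorial design of $\mathbf{w}_n[\pi]$ and $\mathbf{w}_n'[\pi]$ is what makes such an induction feasible, and overcoming this combinatorial bookkeeping is the crux of the theorem.
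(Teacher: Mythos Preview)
Your non-finite-basedness argument contains two genuine gaps. First, you require $B_n\in\mathbf J$ while simultaneously having $B_n$ refute $\mathbf w_n[\sigma]\approx\mathbf w_n'[\sigma]$; this is impossible, since membership in $\mathbf J$ means satisfying every identity of $\mathbf J$, that one included. Second, even after dropping ``$B_n\in\mathbf J$'', the conclusion you draw does not follow: exhibiting monoids that separate the listed axioms from one another shows only that the \emph{given} infinite basis is irredundant, not that $\mathbf J$ lacks \emph{every} finite basis. A variety can have an irredundant infinite axiomatisation and still be finitely based by entirely different identities. What must be shown is that for any finite $\Sigma$ true in $\mathbf J$ some identity of $\mathbf J$ fails to follow from $\Sigma$. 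The paper does precisely this: given a hypothetical finite basis $\Sigma$ whose words have length at most $k$, it proves by a deduction-theoretic analysis (tracking the possible one-step rewrites of the words $\mathbf u_n[p,\ell_1,\dots,\ell_n]$) that $\mathbf w_n[\varepsilon]\approx\mathbf w_n'[\varepsilon]$ cannot be derived from $\Sigma$ once $n>k$.

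In the limit-property step you have the direction reversed: since $\mathbf V\subsetneq\mathbf J$, the separating identity holds in $\mathbf V$ and fails in $\mathbf J$, not the other way round. More importantly, the paper avoids your syntactic truncation scheme altogether. It first determines the entire subvariety lattice $L(\mathbf J)$, which turns out to be a twelve-element lattice whose proper members each carry an explicit finite basis; indeed it suffices to note that the unique maximal proper subvariety $\mathbf I$ satisfies $xzytxy\approx xzytyx$ and $xyzxty\approx yxzxty$, to which a finite-basis theorem of Lee applies, so every subvariety of $\mathbf I$ is finitely based. Finite generation is then obtained indirectly as well: the lattice is small and $\mathbf J$ is locally finite (by a result of Sapir), and a general lemma of Jackson says small plus locally finite implies finitely generated. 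Your Rees-quotient and truncation approaches might be pushed through, but they are considerably more laborious and yield less structural information than the lattice description the paper uses.
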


Recall that a variety is called \emph{Cross} if it is finitely based, finitely generated and small. A non-Cross variety is said to be \emph{almost Cross} if all its proper subvarieties are Cross. As we will see below, Theorem~\ref{main result} implies

\begin{corollary}
\label{J is Cross}
The variety $\mathbf J$ is almost Cross.
\end{corollary}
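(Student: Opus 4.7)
The plan is to derive the corollary from Theorem~\ref{main result} by verifying the three defining properties of Cross for every proper subvariety of $\mathbf{J}$ and by observing that $\mathbf{J}$ itself fails to be Cross. The non-Crossness of $\mathbf{J}$ is immediate: by the theorem $\mathbf{J}$ is limit, hence non-finitely based.

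Fix a proper subvariety $\mathbf{W}\subsetneq\mathbf{J}$. Finite basedness of $\mathbf{W}$ is free, since $\mathbf{J}$ is limit. For finite generation, note that $\mathbf{J}$, being finitely generated, is locally finite, and so is $\mathbf{W}$; fixing a finite identity basis of $\mathbf{W}$ whose letters lie in some set of cardinality $k$, the relatively free monoid $F_k(\mathbf{W})$ is finite, and by the standard fact that in a locally finite variety a finitely based subvariety is generated by its $k$-generated relatively free object, one obtains $\mathbf{W}=\var F_k(\mathbf{W})$.

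The main step, and the principal obstacle, is to show that $\mathbf{W}$ is small, i.e.\ has only finitely many subvarieties. I would first observe that every subvariety of $\mathbf{W}$ is also a proper subvariety of $\mathbf{J}$, and hence is finitely based and finitely generated by the argument of the previous paragraph. Thus $\mathbf{W}$ is finitely generated and hereditarily finitely based. To convert this into smallness I would invoke the known structural principle for finitely generated monoid varieties that hereditary finite basedness forces the subvariety lattice to be finite (so that the variety is actually Cross). Applied to $\mathbf{W}$, this delivers the remaining Cross property and completes the deduction of the corollary from Theorem~\ref{main result}.
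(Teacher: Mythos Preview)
Your argument for non-Crossness of $\mathbf J$ and for the finite basedness and finite generation of each proper subvariety $\mathbf W$ is fine; in fact your route to finite generation (via the finitely generated free object over a finite basis) is a clean alternative to the paper's use of Lemma~\ref{small+LF=>FG}.

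The gap is in the smallness step. You appeal to a ``known structural principle'' that a finitely generated, hereditarily finitely based monoid variety has a finite subvariety lattice. No such general theorem exists. Hereditary finite basedness only guarantees that the subvariety lattice is countable (each subvariety is determined by a finite identity system), not that it is finite, and finite generation does not close this gap: there is no Oates--Powell-type theorem for monoids ensuring that a finitely generated Specht variety is Cross. Absent a proof of this ``principle'' in the present setting, the deduction of smallness collapses, and with it the corollary.

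The paper avoids this entirely by invoking Proposition~\ref{L(J)}, which explicitly determines the lattice $L(\mathbf J)$ and shows it to be a $12$-element lattice. Any proper subvariety $\mathbf W$ then has $L(\mathbf W)$ contained in this finite lattice, so $\mathbf W$ is small; combined with finite basedness (from the limit property) and local finiteness, Lemma~\ref{small+LF=>FG} yields finite generation, and $\mathbf W$ is Cross. If you want to bypass the full description of $L(\mathbf J)$, you still need some concrete structural input replacing your unproven principle---for instance, showing directly that $\mathbf I$ is the unique maximal proper subvariety and then quoting Lee's result that varieties satisfying~\eqref{xzytxy=xzytyx} and~\eqref{xyzxty=yxzxty} are hereditarily finitely based with finite subvariety lattices.
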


We note that only a few papers with new explicit examples of almost Cross varieties of aperiodic monoids are known. They are~\cite{Jackson-05,Lee-13,Gusev-Vernikov-18,Jackson-Lee-18,Wismath-86,Zhang-Luo-19+}.

If $\mathbf X$ is a monoid variety then we denote by $\overleftarrow{\mathbf X}$ the variety \emph{dual to} $\mathbf X$, i.e., the variety consisting of monoids antiisomorphic to monoids from $\mathbf X$. A monoid variety $\mathbf X$ is called \emph{self-dual} if $\mathbf X = \overleftarrow{\mathbf X}$. We note that all the limit monoid varieties from~\cite{Jackson-05,Zhang-Luo-19+} are self-dual, while the variety $\mathbf J$ is non-self-dual. So, Theorem~\ref{main result} implies that the variety $\overleftarrow{\mathbf J}$ is a finitely generated limit variety of monoids too.

The article consists of four sections. Section~\ref{preliminaries} contains definitions, notation and auxiliary results. In Section~\ref{the subvariety lattice of J} we describe the subvariety lattice of the variety $\mathbf J$, while Section~\ref{proof of main result} is devoted to the proof of Theorem~\ref{main result}.

\section{Preliminaries}
\label{preliminaries}

Recall that a variety of universal algebras is called \emph{locally finite} if all of its finitely generated members are finite. Varieties with a finite subvariety lattice are called \emph{small}.

\begin{lemma}[\mdseries{\!\!\cite[Lemma~6.1]{Jackson-05}}]
\label{small+LF=>FG}
Every small locally finite variety of algebras is finitely generated.\qed
\end{lemma}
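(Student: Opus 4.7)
The plan is to exploit the finiteness of the subvariety lattice to produce finitely many ``witness'' algebras, one for each maximal proper subvariety, and then to generate $\mathbf{V}$ as the variety of a single finite direct product. If $\mathbf{V}$ is the trivial variety the claim is immediate, so assume otherwise and enumerate the maximal proper subvarieties of $\mathbf{V}$ as $\mathbf{M}_1, \dots, \mathbf{M}_k$; smallness ensures that $k$ is finite.

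For each $i$, since $\mathbf{M}_i$ is a proper subvariety of $\mathbf{V}$, there exists an identity $u_i \approx v_i$ that holds in $\mathbf{M}_i$ but fails in $\mathbf{V}$. Pick an algebra $A_i \in \mathbf{V}$ and a concrete substitution of the variables in $u_i, v_i$ that witnesses this failure. The subalgebra $B_i$ of $A_i$ generated by the (finitely many) values appearing in the substitution is finitely generated, hence finite by local finiteness. Then $B_i \in \mathbf{V}$ is a finite algebra in which the same substitution continues to falsify $u_i \approx v_i$.

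Next, form the finite direct product $A = B_1 \times \cdots \times B_k$, which again lies in $\mathbf{V}$, and let $\mathbf{W} = \var(A)$. Each $B_i$ is a homomorphic image of $A$ via the $i$-th projection, so $u_i \approx v_i$ fails in $\mathbf{W}$ for every $i$. Consequently $\mathbf{W} \not\subseteq \mathbf{M}_i$ for any $i$. Since the subvariety lattice of $\mathbf{V}$ is finite, every proper subvariety is contained in some maximal one; therefore $\mathbf{W}$ cannot be proper, and $\mathbf{W} = \mathbf{V}$, exhibiting $A$ as a finite generator.

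The only structural ingredient is the separation of distinct varieties by identities, which yields the $u_i \approx v_i$; local finiteness then supplies finite witnesses, and the finiteness of the subvariety lattice guarantees that a single finite product is enough. I expect no real obstacle: the argument is essentially a ``Birkhoff plus pigeonhole'' combination, and the only point to be careful about is ensuring that the witnessing substitution for $u_i \approx v_i$ really survives passage to the generated subalgebra $B_i$ and then to the product $A$, both of which are immediate from basic properties of subalgebras and projections.
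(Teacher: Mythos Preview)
Your argument is correct and is the standard proof of this folklore fact. Note, however, that the paper does not actually supply a proof of this lemma: it is stated with a terminal \qed\ and attributed to \cite[Lemma~6.1]{Jackson-05}, so there is no in-paper argument to compare against. Your proof is almost certainly the same one Jackson gives, since the ``pick a finite witness for each coatom and take the product'' approach is the canonical way to establish this result.
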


As usual, $\End(F^1)$ denotes the endomorphism monoid of the monoid $F^1$. The following statement is the specialization for monoids of a well-known universal-algebraic fact.

\begin{lemma}
\label{deduction}
The identity $\mathbf u\approx \mathbf v$ holds in the variety of monoids given by an identity system $\Sigma$ if and only if there exists a sequence of words
\begin{equation}
\label{sequence of words}
\mathbf v_0,\mathbf v_1,\ldots, \mathbf v_m
\end{equation}
such that $\mathbf u=\mathbf v_0$, $\mathbf v_m=\mathbf v$ and, for any $0\le i<m$, there are words $\mathbf a_i,\mathbf b_i\in F^1$, an endomorphism $\xi_i\in\End(F^1)$ and an identity $\mathbf s_i\approx \mathbf t_i\in\Sigma$ such that either $\mathbf v_i=\mathbf a_i\xi_i(\mathbf s_i)\mathbf b_i$ and $\mathbf v_{i+1}=\mathbf a_i\xi_i(\mathbf t_i)\mathbf b_i$ or $\mathbf v_i=\mathbf a_i\xi_i(\mathbf t_i)\mathbf b_i$ and $\mathbf v_{i+1}=\mathbf a_i\xi_i(\mathbf s_i)\mathbf b_i$.\qed
\end{lemma}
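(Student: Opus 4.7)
The plan is to give the standard Birkhoff-style proof of the deduction theorem for equational logic, specialized to monoids. The easy direction is the implication from the existence of a sequence to the identity holding in $\var\,\Sigma$. Here I would observe that each elementary step $\mathbf v_i \to \mathbf v_{i+1}$ is obtained from the identity $\mathbf s_i \approx \mathbf t_i \in \Sigma$ by applying an endomorphism of $F^1$ (which corresponds to a substitution of words for letters) and then multiplying on the left by $\mathbf a_i$ and on the right by $\mathbf b_i$. Since every monoid in $\var\,\Sigma$ satisfies $\mathbf s_i \approx \mathbf t_i$, such a monoid also satisfies $\mathbf v_i\approx\mathbf v_{i+1}$. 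Transitivity along the sequence then yields $\mathbf u\approx\mathbf v$ in $\var\,\Sigma$.

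For the converse, I would introduce a binary relation $\sim$ on $F^1$, declaring $\mathbf u\sim\mathbf v$ precisely when a sequence as described in the statement exists. The key step is to verify that $\sim$ is a monoid congruence. Reflexivity follows by taking $m=0$; symmetry is built into the definition since each step is allowed to use either orientation of $\mathbf s_i\approx\mathbf t_i$; transitivity is obtained by concatenating two witnessing sequences; and compatibility with multiplication is immediate, because if $\mathbf u\sim\mathbf v$ is witnessed by a sequence with constants $\mathbf a_i,\mathbf b_i$, then for any $\mathbf w,\mathbf w'\in F^1$ the relation $\mathbf w\mathbf u\mathbf w'\sim\mathbf w\mathbf v\mathbf w'$ is witnessed by the sequence obtained by replacing $\mathbf a_i$ with $\mathbf w\mathbf a_i$ and $\mathbf b_i$ with $\mathbf b_i\mathbf w'$.

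Next I would show that the quotient $M:=F^1/{\sim}$ belongs to $\var\,\Sigma$. It suffices to check that every identity $\mathbf s\approx\mathbf t$ from $\Sigma$ holds in $M$. Given any assignment of the letters to elements of $M$, lift it to an endomorphism $\xi \in \End(F^1)$ followed by the canonical projection $F^1\to M$. The single-step sequence $\mathbf v_0=\xi(\mathbf s),\ \mathbf v_1=\xi(\mathbf t)$, with $\mathbf a_0=\mathbf b_0=1$ (the empty word) and the identity $\mathbf s\approx\mathbf t$ itself, witnesses $\xi(\mathbf s)\sim\xi(\mathbf t)$, so both sides project to the same element of $M$.

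To conclude, since $M\in\var\,\Sigma$, any identity $\mathbf u\approx\mathbf v$ that holds throughout $\var\,\Sigma$ must hold in $M$ under the evaluation sending each letter to its own $\sim$-class. This forces $\mathbf u\sim\mathbf v$, which is exactly the required sequence. I do not expect any serious obstacle: the argument is the standard completeness theorem of equational logic, and the main care needed is only bookkeeping, making sure the definition of the sequence accommodates the empty word $1\in F^1$ as an $\mathbf a_i$ or $\mathbf b_i$ and the degenerate case $m=0$, and recording that endomorphisms of $F^1$ correspond bijectively to letter assignments. This routine nature is presumably why the lemma is stated with \textbf{\qed} in the paper.
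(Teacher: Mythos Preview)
Your proposal is correct and gives the standard Birkhoff completeness argument for equational logic specialized to monoids. The paper itself offers no proof: it states the lemma as ``the specialization for monoids of a well-known universal-algebraic fact'' and closes it with a \qed, so your write-up is exactly the routine argument the paper is implicitly invoking.
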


A letter is called \emph{simple} [\emph{multiple}] \emph{in a word} $\mathbf w$ if it occurs in $\mathbf w$ once [at least twice]. The set of all simple [multiple] letters in a word \textbf w is denoted by $\simple(\mathbf w)$ [respectively $\mul(\mathbf w)$]. The \emph{content} of a word \textbf w, i.e., the set of all letters occurring in $\mathbf w$, is denoted by $\con(\mathbf w)$. We denote the empty word by $\lambda$. The number of occurrences of the letter $x$ in $\mathbf w$ is denoted by $\occ_x(\mathbf w)$. For a word \textbf w and letters $x_1,x_2,\dots,x_k\in \con(\mathbf w)$, let $\mathbf w(x_1,x_2,\dots,x_k)$ be the word obtained from $\mathbf w$ by deleting from $\mathbf w$ all letters except $x_1,x_2,\dots,x_k$. 

Let $\mathbf w$ be a word and $\simple(\mathbf w)=\{t_1,t_2,\dots, t_m\}$. We can assume without loss of generality that $\mathbf w(t_1,t_2,\dots, t_m)=t_1t_2\cdots t_m$. Then $\mathbf w = t_0\mathbf w_0 t_1 \mathbf w_1 \cdots t_m \mathbf w_m$ where $\mathbf w_0,\mathbf w_1,\dots,\mathbf w_m$ are possibly empty words and $t_0=\lambda$. The words $\mathbf w_0$, $\mathbf w_1$, \dots, $\mathbf w_m$ are called \emph{blocks} of a word $\bf w$, while $t_0,t_1,\dots,t_m$ are said to be \emph{dividers} of $\mathbf w$. The representation of the word \textbf w as a product of alternating dividers and blocks, starting with the divider $t_0$ and ending with the block $\mathbf w_m$ is called a \emph{decomposition} of the word \textbf w. For a given word \textbf w, a letter $x\in\con(\mathbf w)$ and a natural number $i\le\occ_x(\mathbf w)$, we denote by $h_i(\mathbf w,x)$ the right-most divider of \textbf w that precedes the $i$th occurrence of $x$ in $\mathbf w$, and by $t(\mathbf w,x)$ the right-most divider of \textbf w that precedes the latest occurrence of $x$ in \textbf w.

\begin{example}
\label{example decompositions}
Let $\mathbf w=yxsxy^2tzy$. Then $\simple(\mathbf w)=\{s,t,z\}$ and $\mul(\mathbf w)=\{x,y\}$. Therefore, the decomposition of $\mathbf w$ has the form
\begin{equation}
\label{example 0-decomposition}
\lambda\cdot\underline{yx}\cdot s\cdot\underline{xy^2}\cdot t\cdot\underline\lambda\cdot z\cdot\underline{y}
\end{equation}
(here we underline blocks to distinguish them from dividers). Then we have that $h_1(\mathbf w,x)=h_1(\mathbf w,y)=h_1(\mathbf w,s)=t(\mathbf w,s)=\lambda$, $h_2(\mathbf w,x)=t(\mathbf w,x)=h_2(\mathbf w,y)=h_3(\mathbf w,y)=h_1(\mathbf w,t)=t(\mathbf w,t)=s$, $h_1(\mathbf w,z)=t(\mathbf w,z)=t$ and $h_4(\mathbf w,y)=t(\mathbf w, y)=z$.
\end{example}

Put
\begin{align*}
&\mathbf E=\var\{x^2\approx x^3,\,x^2y\approx xyx,\,x^2y^2\approx y^2x^2\},\\
&\mathbf F=\var\{\Phi,\, xyxz\approx xyxzx\}.
\end{align*}
The following statement implies, in particular, that $\mathbf E\subset\mathbf F$. 

\begin{lemma}[\mdseries{\!\!\cite[Propositions~4.2 and 6.9(i)]{Gusev-Vernikov-18}}]
\label{word problems}
A non-trivial identity $\mathbf u\approx \mathbf v$ holds:
\begin{itemize}
\item[\textup{(i)}]in the variety $\mathbf E$ if and only if
\begin{equation}
\label{sim(u)=sim(v) & mul(u)=mul(v)}
\simple(\mathbf u)=\simple(\mathbf v)\text{ and }\mul(\mathbf u)=\mul(\mathbf v)
\end{equation}
and
\begin{equation}
\label{h_1(u,x)=h_1(v,x)}
h_1(\mathbf u,x)= h_1(\mathbf v,x)\text{ for all }x\in \con(\mathbf u);
\end{equation}
\item[\textup{(ii)}]in the variety $\mathbf F$ if and only if 
\begin{equation}
\label{h_2(u,x)=h_2(v,x)}
h_2(\mathbf u,x)= h_2(\mathbf v,x)\text{ for all }x\in \con(\mathbf u)
\end{equation}
and the claims~\eqref{sim(u)=sim(v) & mul(u)=mul(v)} and~\eqref{h_1(u,x)=h_1(v,x)} are true.\qed
\end{itemize}
\end{lemma}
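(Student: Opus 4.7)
Both parts of the lemma follow the same necessity/sufficiency template, with (ii) refining (i) by a single additional invariant. The plan is to handle necessity via Lemma~\ref{deduction}, by checking that each defining identity, applied in arbitrary context, preserves the listed invariants; and to handle sufficiency via a normal-form argument, by showing that two words with identical invariants can be rewritten to a common canonical word.

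For necessity, I would reduce to a finite list of local checks: for every defining identity $\mathbf{s}\approx\mathbf{t}$ of the relevant variety, every endomorphism $\xi\in\End(F^1)$, and every pair $\mathbf{a},\mathbf{b}\in F^1$, the words $\mathbf{a}\xi(\mathbf{s})\mathbf{b}$ and $\mathbf{a}\xi(\mathbf{t})\mathbf{b}$ must agree on the relevant invariants. For (i), this involves each of $x^2\approx x^3$, $x^2y\approx xyx$, and $x^2y^2\approx y^2x^2$; the key observation is that these identities rewrite only fragments whose letters are already multiple in the ambient word, and none of them moves a letter across a divider of the surrounding word, which guarantees preservation of $\simple$, $\mul$, and $h_1$. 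For (ii), one first establishes $\mathbf{E}\subseteq\mathbf{F}$ by deriving $x^2\approx x^3$ and $x^2y\approx xyx$ from the identities defining $\mathbf{F}$, and then checks that the identities of $\Phi$ and $xyxz\approx xyxzx$ additionally preserve $h_2(\mathbf{w},x)$ for every $x$; the same invariance principle applies, since these rewrites only shuffle second and later occurrences.

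For sufficiency, I would construct a canonical form depending only on the invariants $(\simple,\mul,h_1)$ in (i), and $(\simple,\mul,h_1,h_2)$ in (ii). Given the decomposition $\mathbf{w}=t_0\mathbf{w}_0t_1\mathbf{w}_1\cdots t_m\mathbf{w}_m$, the $\mathbf{E}$-canonical form writes each block $\mathbf{w}_i$ as: first, the multiple letters with $h_1$ equal to $t_i$ (in some fixed total order on letters), followed by squares $y^2$ of all multiple letters whose $h_1$ lies strictly to the left of $t_i$ (again in fixed order). The identities of $\mathbf{E}$ suffice: $x^2\approx x^3$ condenses repeated powers, $x^2y\approx xyx$ transports a square past a single letter, and $x^2y^2\approx y^2x^2$ permutes squares of distinct letters. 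The $\mathbf{F}$-canonical form additionally retains the second occurrence of each multiple letter faithfully, as dictated by $h_2$, while any further occurrences at the tail of a later block can be inserted or deleted using $xyxz\approx xyxzx$ in combination with $\Phi$.

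The main obstacle is the normal-form reduction in sufficiency, particularly for part (ii). For $\mathbf{E}$ the combinatorics stays close to the surface: one just manipulates squares within an already-fixed block structure. For $\mathbf{F}$ the argument is more delicate, because $xyxz\approx xyxzx$ is asymmetric in position, so one must orchestrate how extra copies are introduced or removed so as not to perturb $h_2$; this interacts nontrivially with the commutation identities of $\Phi$ and is where most of the technical work lies. Since the statement is quoted verbatim from \cite{Gusev-Vernikov-18}, my proof would in fact amount to reproducing the normal-form constructions given there, adapted to the notation of the present paper.
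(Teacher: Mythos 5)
The paper offers no proof of this lemma at all: it is imported verbatim from \cite{Gusev-Vernikov-18} (Propositions~4.2 and~6.9(i)) and closed with a \textup{qed} mark, so there is nothing internal to compare against. Your overall template --- necessity by checking, via Lemma~\ref{deduction}, that each defining identity preserves the invariants under arbitrary substitution and context, and sufficiency by reduction to a canonical form determined by those invariants --- is the standard and correct strategy, and the invariance checks you sketch for part~(i) do go through.

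There is, however, one concrete error and one acknowledged hole. The error: you propose to establish $\mathbf E\subseteq\mathbf F$ ``by deriving $x^2\approx x^3$ and $x^2y\approx xyx$ from the identities defining $\mathbf F$''. This has the containment backwards: $\mathbf E\subseteq\mathbf F$ is equivalent to every identity of $\mathbf F$ holding in $\mathbf E$, so what must be checked is that $\Phi$ and $xyxz\approx xyxzx$ hold in $\mathbf E$ (which they do, e.g.\ by part~(i)). Moreover $x^2y\approx xyx$ is \emph{not} derivable from the basis of $\mathbf F$: it fails in $\mathbf F$ because $h_2(x^2y,x)=\lambda\ne y=h_2(xyx,x)$ --- indeed $\mathbf E$ is a \emph{proper} subvariety of $\mathbf F$, which is precisely why the extra invariant $h_2$ appears in part~(ii), so this step as written would fail. (Only $x^2\approx x^3$ follows from $\mathbf F$'s basis, by substituting the empty word for $y$ in $xyx\approx xyx^2$.) The hole: the sufficiency argument for~(ii), which you yourself identify as the technical core, is not actually carried out; the description of the $\mathbf F$-canonical form (``retains the second occurrence of each multiple letter faithfully, as dictated by $h_2$'') is not precise enough to verify that it is reachable using only the identities of $\mathbf F$ and that it is a function of $(\simple,\mul,h_1,h_2)$ alone. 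Deferring this to \cite{Gusev-Vernikov-18} is exactly what the paper does, so it is acceptable as a citation, but it is not a proof.
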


Lemma~\ref{word problems} implies the following two statements.

\begin{corollary}
\label{word problem F vee dual to E}
A non-trivial identity $\mathbf u\approx \mathbf v$ holds in the variety $\mathbf F\vee\overleftarrow{\mathbf E}$ if and only if
\begin{equation}
\label{t(u,x)=t(v,x)}
t(\mathbf u,x)= t(\mathbf v,x)\text{ for all }x\in \con(\mathbf u)
\end{equation}
and the claims~\eqref{sim(u)=sim(v) & mul(u)=mul(v)}--\eqref{h_2(u,x)=h_2(v,x)} are true.\qed
\end{corollary}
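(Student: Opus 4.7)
The plan is to combine Lemma~\ref{word problems} with the standard fact that an identity holds in the join $\mathbf U\vee\mathbf V$ of two varieties if and only if it holds in both $\mathbf U$ and $\mathbf V$. Accordingly, I would first reformulate the problem: $\mathbf u\approx\mathbf v$ holds in $\mathbf F\vee\overleftarrow{\mathbf E}$ if and only if $\mathbf u\approx\mathbf v$ holds in $\mathbf F$ and, simultaneously, the reversed identity $\overleftarrow{\mathbf u}\approx\overleftarrow{\mathbf v}$ holds in $\mathbf E$ (the latter by the duality principle: an identity holds in $\overleftarrow{\mathbf E}$ iff its reverse holds in $\mathbf E$).

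The first half is delivered directly by Lemma~\ref{word problems}(ii): it is precisely the conjunction of \eqref{sim(u)=sim(v) & mul(u)=mul(v)}, \eqref{h_1(u,x)=h_1(v,x)}, and \eqref{h_2(u,x)=h_2(v,x)}. For the second half I would apply Lemma~\ref{word problems}(i) to $\overleftarrow{\mathbf u}\approx\overleftarrow{\mathbf v}$. Since $\simple$ and $\mul$ are manifestly invariant under reversal, the \eqref{sim(u)=sim(v) & mul(u)=mul(v)} clause for the reversed pair coincides with \eqref{sim(u)=sim(v) & mul(u)=mul(v)} for the original pair and is already accounted for. Only the condition $h_1(\overleftarrow{\mathbf u},x)=h_1(\overleftarrow{\mathbf v},x)$ for every $x\in\con(\mathbf u)$ remains to be re-expressed in terms of $\mathbf u$ and $\mathbf v$ themselves.

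To perform that re-expression I would observe that if $\mathbf u=t_0\mathbf u_0t_1\mathbf u_1\cdots t_m\mathbf u_m$ is the decomposition of $\mathbf u$, then the decomposition of $\overleftarrow{\mathbf u}$ is $\lambda\cdot\overleftarrow{\mathbf u_m}\cdot t_m\cdot\overleftarrow{\mathbf u_{m-1}}\cdot t_{m-1}\cdots t_1\cdot\overleftarrow{\mathbf u_0}$. Hence the right-most divider of $\overleftarrow{\mathbf u}$ preceding the first occurrence of a letter $x$ records exactly which block of $\mathbf u$ contains the latest occurrence of $x$, which is the very information encoded by $t(\mathbf u,x)$. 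Under \eqref{sim(u)=sim(v) & mul(u)=mul(v)} the dividers of $\mathbf u$ and of $\mathbf v$ are canonically identified, and a short check then shows that $h_1(\overleftarrow{\mathbf u},x)=h_1(\overleftarrow{\mathbf v},x)$ for all $x\in\con(\mathbf u)$ is equivalent to condition~\eqref{t(u,x)=t(v,x)}. Gluing the two halves together yields the four conditions in the statement. The only slightly delicate step is the bookkeeping that relates $h_1$ in the reversed word to $t$ in the original; everything else is a mechanical invocation of Lemma~\ref{word problems} and the join principle.
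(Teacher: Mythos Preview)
Your proposal is correct and is precisely the argument the paper has in mind: the corollary is stated immediately after Lemma~\ref{word problems} with a bare \qed\ and the remark that Lemma~\ref{word problems} implies it, so the paper itself gives no proof beyond what you have spelled out. Your translation of $h_1(\overleftarrow{\mathbf u},x)=h_1(\overleftarrow{\mathbf v},x)$ into $t(\mathbf u,x)=t(\mathbf v,x)$ via the reversed decomposition is exactly the intended step.
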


\begin{corollary}
\label{decompositions of u and v in E}
Let $\mathbf u\approx \mathbf v$ be an identity that holds in the variety $\mathbf E$. Suppose that 
\begin{equation}
\label{decomposition of u}
t_0\mathbf u_0 t_1 \mathbf u_1 \cdots t_m \mathbf u_m
\end{equation}
is the decomposition of $\mathbf u$. Then the decomposition of $\mathbf v$ has the form
\begin{equation}
\label{decomposition of v}
t_0\mathbf v_0 t_1 \mathbf v_1 \cdots t_m \mathbf v_m.
\end{equation}
\end{corollary}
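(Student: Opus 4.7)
The plan is to unpack the word-problem characterisation of $\mathbf E$ given by Lemma~\ref{word problems}(i). Since $\mathbf u\approx\mathbf v$ holds in $\mathbf E$, the condition $\simple(\mathbf u)=\simple(\mathbf v)$ tells me that the set of non-trivial dividers of $\mathbf v$ coincides with $\{t_1,t_2,\dots,t_m\}$. Consequently the decomposition of $\mathbf v$ has the form $t_0\mathbf v_0\,t_{i_1}\mathbf v_1\cdots t_{i_m}\mathbf v_m$ for some permutation $i_1,i_2,\dots,i_m$ of $1,2,\dots,m$, and the task reduces to verifying that this permutation is the identity.

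To pin down the order of the dividers in $\mathbf v$ I will use the second consequence of Lemma~\ref{word problems}(i), namely $h_1(\mathbf u,x)=h_1(\mathbf v,x)$ for every $x\in\con(\mathbf u)$. From the decomposition~\eqref{decomposition of u} it is immediate that $h_1(\mathbf u,t_k)=t_{k-1}$ for each $1\le k\le m$, and an analogous formula holds for $\mathbf v$. I then argue by induction on $j$ that $i_j=j$. In the base case $j=1$ the divider $t_{i_1}$ is the first one in $\mathbf v$ following $t_0=\lambda$, whence $h_1(\mathbf v,t_{i_1})=\lambda$; combining this with $h_1(\mathbf u,t_{i_1})=t_{i_1-1}$ forces $i_1=1$. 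For the inductive step, granted $i_1=1,\dots,i_{j-1}=j-1$, I get $h_1(\mathbf v,t_{i_j})=t_{j-1}$, and since the only letter $t_k$ in $\mathbf u$ with $h_1(\mathbf u,t_k)=t_{j-1}$ is $t_j$, the equality $h_1(\mathbf u,t_{i_j})=h_1(\mathbf v,t_{i_j})$ yields $i_j=j$.

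I anticipate no real obstacle; the statement is essentially a repackaging of Lemma~\ref{word problems}(i) together with the combinatorial definition of the decomposition. The only subtlety worth flagging is that the corollary asserts nothing about the blocks themselves: the claim is only that the sequence $t_0,t_1,\dots,t_m$ of dividers of $\mathbf u$ is also the sequence of dividers of $\mathbf v$, and this is exactly what the induction above delivers, so the displayed form~\eqref{decomposition of v} follows at once.
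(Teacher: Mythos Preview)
Your argument is correct and follows the same route as the paper: both invoke Lemma~\ref{word problems}(i) to get $\simple(\mathbf u)=\simple(\mathbf v)$ and $h_1(\mathbf u,x)=h_1(\mathbf v,x)$, then use the former to identify the set of dividers of $\mathbf v$ and the latter to recover their order. The paper condenses your inductive verification into the single line ``the claim~\eqref{h_1(u,x)=h_1(v,x)} implies that $\mathbf v(t_1,t_2,\dots,t_m)=t_1t_2\cdots t_m$'', but the content is identical.
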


\begin{proof}
In view of Lemma~\ref{word problems}(i), the claims~\eqref{sim(u)=sim(v) & mul(u)=mul(v)} and~\eqref{h_1(u,x)=h_1(v,x)} are true. Taking into account the claim~\eqref{sim(u)=sim(v) & mul(u)=mul(v)}, we obtain that $\simple(\mathbf v)=\{t_1,t_2,\dots,t_m\}$. Then the claim~\eqref{h_1(u,x)=h_1(v,x)} implies that $\mathbf v(t_1,t_2,\dots, t_m)=t_1t_2\cdots t_m$, and we are done.
\end{proof}

If $\mathbf u$ and $\mathbf v$ are words and $\varepsilon$ is an identity then we will write $\mathbf u\stackrel{\varepsilon}\approx\mathbf v$ in the case when the identity $\mathbf u\approx\mathbf v$ follows from $\varepsilon$.

\begin{lemma}
\label{basis for E vee dual E}
The identities
\begin{align}
\label{xyzx=xyxzx}
xyzx&\approx xyxzx,\\
\label{xxyy=yyxx}
x^2y^2&\approx y^2x^2
\end{align}
form an identity basis of the variety $\mathbf E\vee\overleftarrow{\mathbf E}$.
\end{lemma}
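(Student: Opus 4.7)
The plan is to prove the equality $\mathbf{E}\vee\overleftarrow{\mathbf{E}}=\var\,\Psi$ with $\Psi=\{\eqref{xyzx=xyxzx},\eqref{xxyy=yyxx}\}$ by establishing both inclusions.

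For $\mathbf{E}\vee\overleftarrow{\mathbf{E}}\supseteq\var\,\Psi$, I would verify that each identity of $\Psi$ holds in both $\mathbf{E}$ and $\overleftarrow{\mathbf{E}}$. The identity \eqref{xxyy=yyxx} is a defining identity of $\mathbf{E}$ and is self-dual, so it holds in both. For \eqref{xyzx=xyxzx}, the two sides have identical simple-letter sets, identical multiple-letter sets, and identical $h_1$-values, so Lemma~\ref{word problems}(i) places it in $\mathrm{Id}(\mathbf{E})$; the same computation applied to the reversed identity $xzyx\approx xzxyx$ places \eqref{xyzx=xyxzx} in $\mathrm{Id}(\overleftarrow{\mathbf{E}})$.

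For the reverse inclusion, let $\mathbf{u}\approx\mathbf{v}$ be an arbitrary non-trivial identity of $\mathbf{E}\vee\overleftarrow{\mathbf{E}}$. Lemma~\ref{word problems}(i) applied to $\mathbf{E}$ and dually to $\overleftarrow{\mathbf{E}}$ yields the four coincidence conditions $\simple(\mathbf{u})=\simple(\mathbf{v})$, $\mul(\mathbf{u})=\mul(\mathbf{v})$, $h_1(\mathbf{u},x)=h_1(\mathbf{v},x)$ and $t(\mathbf{u},x)=t(\mathbf{v},x)$ for every $x\in\con(\mathbf{u})$. The plan is to reduce both $\mathbf{u}$ and $\mathbf{v}$ to a common normal form determined solely by these four invariants, working only modulo $\Psi$. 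From \eqref{xyzx=xyxzx} I would first record the consequences $x^2\approx x^3$, $xyx\approx x^2yx$, and $xyx\approx xyx^2$, obtained by substituting the empty word for $y$ and/or $z$. Together with the full identity \eqref{xyzx=xyxzx} they permit one to freely insert or delete occurrences of any multiple letter $x$ in every block strictly between its first-occurrence block $i_x$ and its last-occurrence block $j_x$, and to reduce the number of occurrences of $x$ in each of the two extremal blocks down to exactly one (or, when $i_x=j_x$, down to the pair $x^2$).

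The main obstacle will be the final step, realigning the \emph{intra-block} ordering of the multiple letters into a canonical ordering that depends only on the four invariants. The prototype move is the chain
\[
xyxy\approx x^2y^2\approx y^2x^2\approx yxyx,
\]
in which the outer steps use the derived identities $xyx\approx x^2yx$ and $xyx\approx xyx^2$ to inflate the letters into squares, and the middle step is \eqref{xxyy=yyxx}. Embedded in an arbitrary context and iterated in a bubble-sort fashion, this realizes any transposition of adjacent multiple letters within a common block, and hence any desired intra-block permutation. Once both $\mathbf{u}$ and $\mathbf{v}$ have been brought into this normal form they coincide, and $\mathbf{u}\approx\mathbf{v}$ is exhibited as a chain of applications of $\Psi$.
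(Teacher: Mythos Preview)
Your approach is correct and genuinely different from the paper's. The paper does not build a normal form: it quotes from \cite{Lee-08} that $\mathbf E\vee\overleftarrow{\mathbf E}$ is generated by a concrete five-element monoid $B_0^1$, then quotes a known identity basis of $B_0^1$ from \cite{Edmunds-77} (namely \eqref{xyzx=xyxzx} together with the three ``separated-swap'' identities \eqref{xzytxy=xzytyx}--\eqref{xzxyty=xzyxty}), and finally checks that those three extra identities are consequences of \eqref{xyzx=xyxzx} and \eqref{xxyy=yyxx}. Your route instead extracts the word-problem characterisation of $\mathbf E\vee\overleftarrow{\mathbf E}$ from Lemma~\ref{word problems}(i) and its dual, and then shows directly that $\Psi$ suffices to normalise every word to a canonical representative of its invariant class. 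This buys self-containment (no appeal to the $B_0^1$ literature) at the cost of a somewhat longer combinatorial argument; the paper's route is shorter but relies on two external results.

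Two small points to tighten. First, your inclusion labels are swapped: verifying that $\Psi$ holds in $\mathbf E$ and in $\overleftarrow{\mathbf E}$ gives $\mathbf E\vee\overleftarrow{\mathbf E}\subseteq\var\,\Psi$, not the reverse, and the normal-form half then yields $\var\,\Psi\subseteq\mathbf E\vee\overleftarrow{\mathbf E}$. Second, the displayed prototype $xyxy\approx x^2y^2$ is not a single application of $xyx\approx x^2yx$ or $xyx\approx xyx^2$; you need a short chain (e.g.\ $xyxy\approx xy^2xy\approx xy^2x^2y\approx x^3y^3\approx x^2y^2$). The cleaner statement of the key move is: whenever $a,b\in\mul(\mathbf w)$ and $\mathbf w=\mathbf p\,ab\,\mathbf q$, one has $\mathbf p\,ab\,\mathbf q\approx\mathbf p\,a^2b^2\,\mathbf q\approx\mathbf p\,b^2a^2\,\mathbf q\approx\mathbf p\,ba\,\mathbf q$ modulo $\Psi$, where the outer steps use the presence of a further occurrence of $a$ (resp.\ $b$) in $\mathbf p\mathbf q$ to double or halve. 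With that lemma stated explicitly, the bubble-sort argument goes through exactly as you describe.
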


\begin{proof}
Consider the semigroup
$$
B_0 = \langle a, b, c \mid a^2 = a, b^2 = b, ab = ba = 0, ac = cb = c \rangle=\{a,b,c,0\}.
$$
It follows from~\cite[Proposition~1.7(i),(ii) and Figure~4]{Lee-08} that the variety $\mathbf E\vee\overleftarrow{\mathbf E}$ is generated by the monoid $B_0^1$, i.e., the semigroup $B_0$ with a new identity element adjoined. The identities~\eqref{xyzx=xyxzx} and
\begin{align}
\label{xzytxy=xzytyx}
xzytxy&\approx xzytyx,\\
\label{xyzxty=yxzxty}
xyzxty&\approx yxzxty,\\
\label{xzxyty=xzyxty}
xzxyty&\approx xzyxty
\end{align}
form an identity basis of the monoid $B_0^1$ by~\cite[Proposition~3.1(i)]{Edmunds-77}. We note that the identity~\eqref{xzytxy=xzytyx} follows from the identities~\eqref{xyzx=xyxzx} and~\eqref{xxyy=yyxx} because
$$
xzytxy\stackrel{\eqref{xyzx=xyxzx}}\approx xzytx^2y^2\stackrel{\eqref{xxyy=yyxx}}\approx  xzyty^2x^2\stackrel{\eqref{xyzx=xyxzx}}\approx xzytyx.
$$
Analogously, the identities~\eqref{xyzx=xyxzx} and~\eqref{xxyy=yyxx} imply the identities~\eqref{xyzxty=yxzxty} and~\eqref{xzxyty=xzyxty}. It remains to note that the identities~\eqref{xyzx=xyxzx} and~\eqref{xxyy=yyxx} hold in $\mathbf E\vee\overleftarrow{\mathbf E}$.
\end{proof}

The set of all letters that occur precisely $k$ times in a word \textbf w is denoted by $\con_k(\mathbf w)$.

\begin{lemma}
\label{basis for F vee dual E}
The identities~\eqref{xxyy=yyxx},~\eqref{xyzxty=yxzxty},~\eqref{xzxyty=xzyxty} and
\begin{align}
\label{xyx=xyxx}
xyx&\approx xyx^2,\\
\label{xyxztx=xyxzxtx}
xyxztx&\approx xyxzxtx
\end{align}
form an identity basis of the variety $\mathbf F\vee\overleftarrow{\mathbf E}$.
\end{lemma}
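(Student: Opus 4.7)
The plan is to prove mutual inclusion of $\mathbf{F}\vee\overleftarrow{\mathbf{E}}$ with the variety $\mathbf{N}$ defined by the five listed identities. For $\mathbf{F}\vee\overleftarrow{\mathbf{E}}\subseteq\mathbf{N}$, I would verify that each of the five identities holds in $\mathbf{F}\vee\overleftarrow{\mathbf{E}}$ by checking the four conditions \eqref{sim(u)=sim(v) & mul(u)=mul(v)}--\eqref{t(u,x)=t(v,x)} of Corollary~\ref{word problem F vee dual to E}. The identities \eqref{xxyy=yyxx} and \eqref{xyx=xyxx} already belong to $\Phi$ and so hold in $\mathbf{F}$; for each of the five, direct inspection confirms that both sides share the same simple and multiple letters and the same divider invariants $h_1$, $h_2$, $t$ for every letter, so they all hold in $\overleftarrow{\mathbf{E}}$ as well.

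For the reverse inclusion $\mathbf{N}\subseteq\mathbf{F}\vee\overleftarrow{\mathbf{E}}$, let $\mathbf{u}\approx\mathbf{v}$ be an arbitrary identity of $\mathbf{F}\vee\overleftarrow{\mathbf{E}}$; I will derive it from the five basis identities. By Corollary~\ref{word problem F vee dual to E} it satisfies the four conditions; in particular it holds in $\mathbf{E}$, so Corollary~\ref{decompositions of u and v in E} yields matching decompositions $\mathbf{u}=t_0\mathbf{u}_0t_1\mathbf{u}_1\cdots t_m\mathbf{u}_m$ and $\mathbf{v}=t_0\mathbf{v}_0t_1\mathbf{v}_1\cdots t_m\mathbf{v}_m$ with the same divider sequence, and conditions \eqref{h_2(u,x)=h_2(v,x)}, \eqref{t(u,x)=t(v,x)} fix, for each multiple letter $x$, the blocks holding its second and last occurrences. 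The strategy is to reduce both words to a canonical form determined only by the shared invariants. The rewriting goes in two phases: first, using the consequence $x^2\approx x^3$ of \eqref{xyx=xyxx} (substitute $y\mapsto\lambda$) together with \eqref{xyxztx=xyxzxtx}, one normalises multiplicities by inserting or deleting occurrences of a letter $x$ in any block strictly between $h_2(\mathbf{w},x)$ and $t(\mathbf{w},x)$ and collapsing repeated runs; second, using \eqref{xxyy=yyxx} to swap squared letters and \eqref{xyzxty=yxzxty}, \eqref{xzxyty=xzyxty} to swap adjacent multiple letters in the presence of the required flanking letters, one reorders each block into the canonical order.

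The main obstacle is the second phase. Unlike the proof of Lemma~\ref{basis for E vee dual E}, where \eqref{xyzx=xyxzx} gives an essentially unrestricted shifting move, the present basis has no such identity — \eqref{xyzx=xyxzx} fails in $\mathbf{F}\vee\overleftarrow{\mathbf{E}}$ because it violates \eqref{h_2(u,x)=h_2(v,x)} — so every commutation must be performed with specific flanking letters in place. Producing those guard letters requires a careful case analysis depending on whether the letters to be swapped have later occurrences and on how their $h_1$, $h_2$, $t$ invariants align; the new identity \eqref{xyxztx=xyxzxtx} is the essential compensating tool for creating the needed guards. The technical core of the proof consists in showing that these restricted commutations, together with \eqref{xyx=xyxx} and \eqref{xyxztx=xyxzxtx}, suffice to realise every rearrangement of letters within a block that is compatible with the four invariant conditions.
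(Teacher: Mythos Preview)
Your overall plan---verify that the five identities hold in $\mathbf F\vee\overleftarrow{\mathbf E}$ via Corollary~\ref{word problem F vee dual to E}, then derive an arbitrary identity of that variety from them by first normalising multiplicities and then reordering each block---is exactly the paper's structure. Where you diverge is in your assessment of where the work lies.

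The paper's proof eliminates precisely the obstacle you flag. Before touching the blocks it first derives the extra swap identity \eqref{xzytxy=xzytyx} from \eqref{xxyy=yyxx} and \eqref{xyx=xyxx} (the short chain $xzytxy\approx xzytx^2y^2\approx xzyty^2x^2\approx xzytyx$). With \eqref{xzytxy=xzytyx}, \eqref{xyzxty=yxzxty}, \eqref{xzxyty=xzyxty} in hand, \emph{every} pair of adjacent occurrences of distinct multiple letters can be swapped: if both are non-first use \eqref{xzytxy=xzytyx}; if both are non-latest use \eqref{xyzxty=yxzxty}; if one is non-first and the other non-latest use \eqref{xzxyty=xzyxty} (in one direction or the other). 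Since any occurrence of a multiple letter is non-first or non-latest, these three cover all cases. There is no need to manufacture ``guard letters'', and \eqref{xyxztx=xyxzxtx} plays no role in the swapping phase.

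The normalisation step is also sharper than you state: the paper uses \eqref{xyx=xyxx} and \eqref{xyxztx=xyxzxtx} to arrange that every multiple letter of $\mathbf u$ and of $\mathbf v$ occurs \emph{exactly three} times (i.e.\ $\mul(\mathbf u)=\con_3(\mathbf u)$ and likewise for $\mathbf v$). This is the point of the normalisation: once each multiple letter has exactly three occurrences, the invariants $h_1,h_2,t$ pin down the block of \emph{every} occurrence, so $\con_j(\mathbf u_i)=\con_j(\mathbf v_i)$ for all $i$ and all $j\in\{1,2,3\}$---the corresponding blocks literally have the same multiset of letters. Then the swap identities above let you reorder each $\mathbf u_i$ into $\mathbf v_i$ one transposition at a time.

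So your proposal would work, but the ``careful case analysis'' you anticipate collapses to a one-line observation once you derive \eqref{xzytxy=xzytyx} and normalise to three occurrences per multiple letter.
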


\begin{proof}
The identities~\eqref{xxyy=yyxx},~\eqref{xyzxty=yxzxty},~\eqref{xzxyty=xzyxty},~\eqref{xyx=xyxx}  and~\eqref{xyxztx=xyxzxtx} hold in $\mathbf F\vee\overleftarrow{\mathbf E}$ by Corollary~\ref{word problem F vee dual to E}. Note that~\eqref{xzytxy=xzytyx} follows from~\eqref{xxyy=yyxx} and~\eqref{xyx=xyxx} because
$$
xzytxy\stackrel{\eqref{xyx=xyxx}}\approx xzytx^2y^2\stackrel{\eqref{xxyy=yyxx}}\approx xzyty^2x^2\stackrel{\eqref{xyx=xyxx}}\approx xzytyx.
$$

Let $\mathbf u\approx \mathbf v$ be an identity that holds in $\mathbf F\vee\overleftarrow{\mathbf E}$. The identities~\eqref{xyx=xyxx} and~\eqref{xyxztx=xyxzxtx} allow us to assume that 
\begin{equation}
\label{mul=con3 for u and v}
\mul(\mathbf u)=\con_3(\mathbf u)\ \text{ and }\ \mul(\mathbf v)=\con_3(\mathbf v).
\end{equation}
In view of Corollary~\ref{decompositions of u and v in E}, if~\eqref{decomposition of u} is the decomposition of $\mathbf u$ then the decomposition of $\mathbf v$ has the form~\eqref{decomposition of v}. This fact, Corollary~\ref{word problem F vee dual to E} and the claim~\eqref{mul=con3 for u and v} imply that
\begin{equation}
\label{con_j(u_i)=con_j(v_i)}
\con_j(\mathbf u_i)=\con_j(\mathbf v_i) \ \text{ for any }\ i=0,1,\dots,m\ \text{ and }\ j=1,2,3.
\end{equation}
Note that the identity~\eqref{xzytxy=xzytyx} [respectively~\eqref{xyzxty=yxzxty}] allows us to swap the adjacent non-first [respectively the non-latest] occurrences of two multiple letters, while the identity~\eqref{xzxyty=xzyxty} allows us to swap a non-first occurrence and a non-latest occurrence of two multiple letters whenever these occurrences are adjacent to each other. So, since the claim~\eqref{con_j(u_i)=con_j(v_i)} is true, the identities~\eqref{xzytxy=xzytyx}--\eqref{xzxyty=xzyxty} imply the identities
$$
\mathbf u=t_0\mathbf u_0t_1\mathbf u_1\cdots t_m\mathbf u_m\approx t_0\mathbf v_0t_1\mathbf u_1\cdots t_m\mathbf u_m\approx\dots\approx t_0\mathbf v_0t_1\mathbf v_1\cdots t_m\mathbf v_m=\mathbf v,
$$
and we are done.
\end{proof}

\section{The subvariety lattice of $\mathbf J$}
\label{the subvariety lattice of J}

The trivial variety of monoids is denoted by $\mathbf T$, while $\mathbf{SL}$ denotes the variety of all semilattice monoids. Put also
\begin{align*}
&\mathbf C=\var\{x^2\approx x^3,\,xy\approx yx\},\\
&\mathbf D=\var\{x^2\approx x^3,\,x^2y\approx xyx\approx yx^2\},\\
&\mathbf H=\var\{\Phi,\,xyxztx\approx xyxzxtx,\, x^2yty\approx xyxty\approx yx^2ty\},\\
&\mathbf I=\var\{\Phi,\,xyxztx\approx xyxzxtx,\, xzxyty\approx xzyxty\}.
\end{align*}
The subvariety lattice of a monoid variety \textbf X is denoted by $L(\mathbf X)$.

The goal of this section is to prove the following

\begin{proposition}
\label{L(J)}
The lattice $L(\mathbf J)$ has the form shown in Fig.~\ref{pic L(J)}.
\end{proposition}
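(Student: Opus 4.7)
The plan is to give a complete classification of the subvarieties of $\mathbf J$ together with all inclusions among them. First, I would verify that each of the listed varieties $\mathbf T$, $\mathbf{SL}$, $\mathbf C$, $\mathbf D$, $\mathbf E$, $\overleftarrow{\mathbf E}$, $\mathbf F$, $\mathbf H$, $\mathbf I$, together with the relevant joins ($\mathbf E\vee\overleftarrow{\mathbf E}$, $\mathbf F\vee\overleftarrow{\mathbf E}$, and similar ones that appear in the figure), is actually contained in $\mathbf J$. This amounts to showing that the defining identities of $\mathbf J$ — the three identities of $\Phi$, the identity $xyxztx\approx xyxzxtx$, and every identity $\mathbf w_n[\pi]\approx\mathbf w_n'[\pi]$ — are consequences of the identities of the candidate. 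The first two are typically immediate; for the family $\mathbf w_n[\pi]\approx\mathbf w_n'[\pi]$ one may use the block/divider machinery together with the word-problem descriptions from Lemma~\ref{word problems} and Corollary~\ref{word problem F vee dual to E}, which handle all candidates sitting inside $\mathbf F\vee\overleftarrow{\mathbf E}$.

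Next, I would pin down the inclusion and non-inclusion relations between these varieties. Inclusions follow directly from the identity bases. To rule out extra inclusions, I would exhibit, for each alleged covering $\mathbf X\prec\mathbf Y$, an identity of $\mathbf X$ that fails in $\mathbf Y$; the needed syntactic certificates come from Lemma~\ref{word problems}, Corollary~\ref{word problem F vee dual to E}, and Lemmas~\ref{basis for E vee dual E} and~\ref{basis for F vee dual E}, which together give explicit bases and word-problem criteria for the key sublattice generated by $\mathbf E$, $\overleftarrow{\mathbf E}$, $\mathbf F$ and their joins. In particular, for the varieties $\mathbf H$ and $\mathbf I$ the two extra identities $x^2yty\approx xyxty\approx yx^2ty$ and $xzxyty\approx xzyxty$ are genuinely distinguishing, which can be demonstrated by short test-word computations using Lemma~\ref{deduction}.

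The main task, and principal obstacle, is the completeness step: showing that every subvariety $\mathbf X$ of $\mathbf J$ coincides with one of the varieties on the diagram. My plan here is to take an arbitrary proper subvariety $\mathbf X\subseteq\mathbf J$ and split into cases according to which of a small list of benchmark identities it satisfies — for example $x\approx x^2$, $xy\approx yx$, $xyx\approx yx^2$, $xyxz\approx xyxzx$, $x^2yty\approx xyxty$, and $xzxyty\approx xzyxty$. In each case, using Lemma~\ref{deduction} together with the decomposition of words into blocks and dividers, one translates membership of a word identity in $\mathbf X$ into purely combinatorial conditions on simple letters and first/last occurrences of multiple letters, and then shows that the identities holding in $\mathbf X$ already entail membership in exactly one of the listed varieties. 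The technical crux is choosing the benchmark identities so that the cases are mutually exclusive and exhaustive, and then managing the interaction between $\Phi$, $xyxztx\approx xyxzxtx$, and the family $\mathbf w_n[\pi]\approx\mathbf w_n'[\pi]$, which controls the subvarieties lying above $\mathbf F\vee\overleftarrow{\mathbf E}$ and which must be eliminated by showing that any identity beyond those already present in $\mathbf J$ forces $\mathbf X$ to collapse into a smaller listed variety.
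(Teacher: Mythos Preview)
Your outline tracks the paper's high-level strategy, but the crucial completeness step for the interval $[\mathbf F\vee\overleftarrow{\mathbf E},\mathbf J]$ is underspecified, and the ``benchmark identity'' approach you sketch will not close the gap by itself. Checking which of a fixed finite list of identities a subvariety $\mathbf X$ satisfies tells you which of the listed varieties $\mathbf X$ is \emph{contained in}; it does not tell you that $\mathbf X$ \emph{equals} one of them. For that you must show that \emph{any} identity $\mathbf u\approx\mathbf v$ holding in $\mathbf X$ but failing in $\mathbf J$ already forces the defining extra identity of $\mathbf I$ (and similarly one level down for $\mathbf H$ and then $\mathbf F\vee\overleftarrow{\mathbf E}$). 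Carrying this out requires two ingredients you do not mention.

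First, you need closure lemmas for $\mathbf J$: roughly, that $\mathbf J$ satisfies $\mathbf v_1 a\mathbf v_2 a\mathbf v_3\approx\mathbf v_1 a^2\mathbf v_2\mathbf v_3$ whenever $\con(\mathbf v_2)\subseteq\mul(\mathbf w)$, and that $\mathbf J$ allows swapping adjacent multiple letters $ab\leftrightarrow ba$ whenever their second and last occurrences interleave in a suitable ``integrated'' way. These are what let you localise the failure of $\mathbf u\approx\mathbf v$ in $\mathbf J$ to a single adjacent pair of occurrences with a very restricted block structure; the infinite family $\mathbf w_n[\pi]\approx\mathbf w_n'[\pi]$ is far too weak to do this directly, and the block/divider machinery from Lemma~\ref{word problems} and Corollary~\ref{word problem F vee dual to E} only controls $\mathbf F\vee\overleftarrow{\mathbf E}$, not $\mathbf J$. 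Second, you need isoterm-type results showing that words of the shape $xzyx^pty^q$, $yx^pty^q$, and $xyzx^pty^q$ are rigid in $\mathbf J$, $\mathbf I$, $\mathbf H$ respectively; these give the strict inclusions and, more importantly, let you extract from the localised failure an identity of the exact form $xzxyty\approx xzyxty$, $xyxty\approx yx^2ty$, or $xyzxty\approx yxzxty$. The paper proves all three of these auxiliary results before the proposition and then argues by analysing the first position where (normalised) $\mathbf u$ and $\mathbf v$ differ, not by testing benchmark identities. Without these tools your case analysis has no leverage above $\mathbf F\vee\overleftarrow{\mathbf E}$.
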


\begin{figure}[htb]
\unitlength=1mm
\linethickness{0.4pt}
\begin{center}
\begin{picture}(55,100)
\put(35,5){\circle*{1.33}}
\put(35,15){\circle*{1.33}}
\put(35,25){\circle*{1.33}}
\put(35,35){\circle*{1.33}}
\put(45,45){\circle*{1.33}}
\put(25,45){\circle*{1.33}}
\put(35,55){\circle*{1.33}}
\put(15,55){\circle*{1.33}}
\put(25,65){\circle*{1.33}}
\put(25,75){\circle*{1.33}}
\put(25,85){\circle*{1.33}}
\put(25,95){\circle*{1.33}}

\put(35,5){\line(0,1){30}}
\put(35,35){\line(-1,1){20}}
\put(35,35){\line(1,1){10}}
\put(25,45){\line(1,1){10}}
\put(25,45){\line(1,1){10}}
\put(15,55){\line(1,1){10}}
\put(45,45){\line(-1,1){20}}
\put(25,65){\line(0,1){30}}

\put(35,2){\makebox(0,0)[cc]{\textbf T}}
\put(37,15){\makebox(0,0)[lc]{\textbf{SL}}}
\put(37,25){\makebox(0,0)[lc]{$\mathbf C$}}
\put(37,35){\makebox(0,0)[lc]{$\mathbf D$}}
\put(23,45){\makebox(0,0)[rc]{$\mathbf E$}}
\put(13,55){\makebox(0,0)[rc]{$\mathbf F$}}
\put(47,45){\makebox(0,0)[lc]{$\overleftarrow{\mathbf E}$}}
\put(37,55){\makebox(0,0)[lc]{$\mathbf E\vee\overleftarrow{\mathbf E}$}}
\put(27,65){\makebox(0,0)[lc]{$\mathbf F\vee\overleftarrow{\mathbf E}$}}
\put(27,75){\makebox(0,0)[lc]{$\mathbf H$}}
\put(27,85){\makebox(0,0)[lc]{$\mathbf I$}}
\put(27,95){\makebox(0,0)[lc]{$\mathbf J$}}
\end{picture}
\end{center}
\caption{The lattice $L(\mathbf J)$}
\label{pic L(J)}
\end{figure}
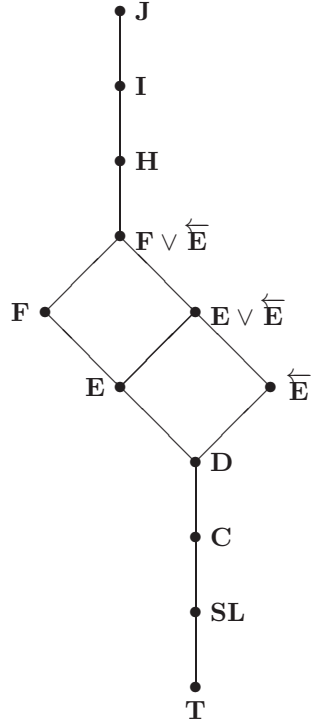

To verify Proposition~\ref{L(J)}, we need several assertions.

\begin{lemma}
\label{L(F vee dual E) and [F vee dual E,X]}
Let $\mathbf X$ be a monoid variety that satisfies the identities~\eqref{xxyy=yyxx}, \eqref{xyx=xyxx}, \eqref{xyxztx=xyxzxtx} and 
\begin{equation}
\label{xyzxy=yxzxy}
xyzxy\approx yxzxy.
\end{equation}
Then the lattice $L(\mathbf X)$ is the set-theoretical union of the lattice $L(\mathbf F\vee\overleftarrow{\mathbf E})$ and the interval $[\mathbf F\vee\overleftarrow{\mathbf E},\mathbf X]$. The lattice $L(\mathbf F\vee\overleftarrow{\mathbf E})$ has the form shown in Fig.~\ref{pic L(J)}.
\end{lemma}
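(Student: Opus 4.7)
I would split the proof into two parts, corresponding to the two assertions of the lemma.

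For the structure of $L(\mathbf F\vee\overleftarrow{\mathbf E})$, my approach combines the finite basis of Lemma~\ref{basis for F vee dual E} with the word problems in Lemma~\ref{word problems} and Corollary~\ref{word problem F vee dual to E}. These tools identify the subvariety lattice of $\mathbf E$ as the chain $\mathbf T\subset\mathbf{SL}\subset\mathbf C\subset\mathbf D\subset\mathbf E$, with $\mathbf F$ as the unique cover of $\mathbf E$ inside $\mathbf F\vee\overleftarrow{\mathbf E}$, and similarly the dual chain covering $\overleftarrow{\mathbf E}$ from below. Joining pairs drawn from these two chains then fills in $\mathbf E\vee\overleftarrow{\mathbf E}$ and $\mathbf F\vee\overleftarrow{\mathbf E}$, yielding the nine listed subvarieties. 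Distinctness at each covering is verified by exhibiting a separating identity (for example $x^2\approx x^3$, $xy\approx yx$, $x^2y\approx xyx$, $xyzx\approx xyxzx$ and their duals).

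For the lattice-union claim, fix $\mathbf Y\in L(\mathbf X)$. If $\mathbf F\vee\overleftarrow{\mathbf E}\subseteq\mathbf Y$, then $\mathbf Y$ is in the interval $[\mathbf F\vee\overleftarrow{\mathbf E},\mathbf X]$ and we are done. Otherwise, some identity of $\mathbf F\vee\overleftarrow{\mathbf E}$ fails in $\mathbf Y$; since $\mathbf Y$ inherits \eqref{xxyy=yyxx}, \eqref{xyx=xyxx}, and \eqref{xyxztx=xyxzxtx} from $\mathbf X$, Lemma~\ref{basis for F vee dual E} forces this failure to be of \eqref{xyzxty=yxzxty} or \eqref{xzxyty=xzyxty}. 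I would then aim to prove $\mathbf F\vee\overleftarrow{\mathbf E}\subseteq\mathbf Y$, that is, every identity $\mathbf u\approx\mathbf v$ of $\mathbf Y$ also holds in $\mathbf F\vee\overleftarrow{\mathbf E}$. Assuming otherwise, Corollary~\ref{word problem F vee dual to E} supplies an identity $\mathbf u\approx\mathbf v$ holding in $\mathbf Y$ that violates one of the conditions \eqref{sim(u)=sim(v) & mul(u)=mul(v)}, \eqref{h_1(u,x)=h_1(v,x)}, \eqref{h_2(u,x)=h_2(v,x)}, or \eqref{t(u,x)=t(v,x)}. Using endomorphic substitutions into $\mathbf u\approx\mathbf v$ together with the four axioms of $\mathbf X$ (each of which holds in $\mathbf Y$), the plan is to derive in $\mathbf Y$ the very identity \eqref{xyzxty=yxzxty} or \eqref{xzxyty=xzyxty} that was supposed to fail in $\mathbf Y$, contradicting the choice of $\mathbf Y$.

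The main obstacle is this last word-combinatorial step: extracting one of \eqref{xyzxty=yxzxty}, \eqref{xzxyty=xzyxty} from an arbitrary identity violating a first-, second-, or last-occurrence condition of the word problem for $\mathbf F\vee\overleftarrow{\mathbf E}$. The principal tools are the swap identity \eqref{xyzxy=yxzxy} for exchanging first occurrences of two multiple letters, together with duplication via \eqref{xyx=xyxx}, the middle-$x$ insertion of \eqref{xyxztx=xyxzxtx}, and square commutation via \eqref{xxyy=yyxx}; these must be orchestrated case by case according to which condition is violated and at which letter position, which is where the technical work is concentrated.
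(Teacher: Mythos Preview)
Your plan for the ``otherwise'' case has the inclusion the wrong way round, and this is not just a typo but a structural problem. You assume $\mathbf F\vee\overleftarrow{\mathbf E}\nsubseteq\mathbf Y$ and then write ``some identity of $\mathbf F\vee\overleftarrow{\mathbf E}$ fails in $\mathbf Y$'' and ``I would then aim to prove $\mathbf F\vee\overleftarrow{\mathbf E}\subseteq\mathbf Y$''. But $\mathbf F\vee\overleftarrow{\mathbf E}\nsubseteq\mathbf Y$ means exactly that some identity holds in $\mathbf Y$ and \emph{fails} in $\mathbf F\vee\overleftarrow{\mathbf E}$, not the reverse; and proving $\mathbf F\vee\overleftarrow{\mathbf E}\subseteq\mathbf Y$ under that case assumption would show the case is empty, which is false (take $\mathbf Y=\mathbf T$). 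What you actually need in this case is the \emph{other} inclusion $\mathbf Y\subseteq\mathbf F\vee\overleftarrow{\mathbf E}$, i.e., that the basis identities \eqref{xyzxty=yxzxty} and \eqref{xzxyty=xzyxty} \emph{both} hold in $\mathbf Y$. Your last paragraph proposes to derive only one of them from a single $\mathbf u\approx\mathbf v$ that violates one of the $h_1$, $h_2$, $t$ conditions; that is not enough, and in fact no single such identity will hand you both \eqref{xyzxty=yxzxty} and \eqref{xzxyty=xzyxty} simultaneously.

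The paper avoids this by not attacking $\mathbf Y\subseteq\mathbf F\vee\overleftarrow{\mathbf E}$ head-on. Instead it splits on whether $\overleftarrow{\mathbf E}\subseteq\mathbf V$. If $\overleftarrow{\mathbf E}\nsubseteq\mathbf V$, an external lemma (the dual of \cite[Lemma~4.3]{Gusev-Vernikov-18}) yields $x^2y\approx x^2yx^2$ in $\mathbf V$, from which \eqref{xyx=xyxx} and \eqref{xyxztx=xyxzxtx} give $xyxz\approx xyxzx$, so $\mathbf V\subseteq\mathbf F$, and the known description of $L(\mathbf F)$ from \cite{Gusev-Vernikov-18} finishes this branch. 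If $\overleftarrow{\mathbf E}\subseteq\mathbf V$ but $\mathbf F\nsubseteq\mathbf V$, one picks an identity of $\mathbf V$ failing in $\mathbf F$, uses Lemma~\ref{word problems}(ii) to locate a letter with $h_k(\mathbf u,x)\ne h_k(\mathbf v,x)$, and after a short word-combinatorial reduction obtains $xyx^2\approx x^2yx^2$, hence $xyzx\approx xyxzx$; by Lemma~\ref{basis for E vee dual E} this forces $\mathbf V\subseteq\mathbf E\vee\overleftarrow{\mathbf E}$, and the known lattice $L(\mathbf E\vee\overleftarrow{\mathbf E})$ from \cite{Lee-08} finishes. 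The picture of $L(\mathbf F\vee\overleftarrow{\mathbf E})$ thus falls out of these two cited lattices rather than being rebuilt from scratch. If you want to salvage your approach, the key missing move is exactly this kind of case split: you must exploit the \emph{extra} identity that $\mathbf Y$ satisfies (coming from $\mathbf F\vee\overleftarrow{\mathbf E}\nsubseteq\mathbf Y$) to push $\mathbf Y$ down into one of $\mathbf F$ or $\mathbf E\vee\overleftarrow{\mathbf E}$, rather than trying to verify \eqref{xyzxty=yxzxty} and \eqref{xzxyty=xzyxty} directly.
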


\begin{proof}
Let $\mathbf V$ be a subvariety of the variety $\mathbf X$. We need to verify that if $\mathbf F\vee\overleftarrow{\mathbf E}\nsubseteq \mathbf V$ then $\mathbf V$ coincides with one of the varieties $\mathbf T$, $\mathbf{SL}$, $\mathbf C$, $\mathbf D$, $\mathbf E$, $\overleftarrow{\mathbf E}$, $\mathbf E\vee \overleftarrow{\mathbf E}$ and $\mathbf F$. Clearly, $\mathbf V$ does not contain either $\mathbf F$ or $\overleftarrow{\mathbf E}$. A variety of monoids is called \emph{completely regular} if it consists of \emph{completely regular monoids}~(i.e., unions of groups). If $\mathbf V$ is completely regular then it is a variety of \emph{bands}, i.e. idempotent monoids because every aperiodic completely regular variety is a variety of bands. Evidently, every variety of bands with the identity~\eqref{xxyy=yyxx} is commutative. Therefore, $\mathbf V$ is one of the varieties $\mathbf T$ or $\mathbf{SL}$. So, we can assume that $\mathbf V$ is non-completely regular. 

Suppose that $\overleftarrow{\mathbf E} \nsubseteq \mathbf V$. It is verified in~\cite[the dual to Lemma~4.3]{Gusev-Vernikov-18} that if $\mathbf Y$ is a non-completely regular variety of monoids that satisfies the identity
\begin{equation}
\label{xx=xxx}
x^2\approx x^3
\end{equation}
and does not contain the variety $\overleftarrow{\mathbf E}$ then $\mathbf Y$ satisfies the identity
\begin{equation}
\label{xxy=xxyxx}
x^2y\approx x^2yx^2.
\end{equation}
This fact implies that the identity~\eqref{xxy=xxyxx} holds in $\mathbf V$. Then the identities
$$
xyxz\stackrel{\eqref{xyx=xyxx}}\approx xyx^2z\stackrel{\eqref{xxy=xxyxx}}\approx xyx^2zx^2\stackrel{\eqref{xyx=xyxx}}\approx xyxzx
$$  
hold in $\mathbf V$. Thus $\mathbf V\subseteq\mathbf F$. It is proved in~\cite[Proposition~6.1]{Gusev-Vernikov-18} that the lattice $L(\mathbf F)$ has the form shown in Fig.~\ref{pic L(J)}. Hence $\mathbf V$ is one of the varieties $\mathbf C$, $\mathbf D$, $\mathbf E$ or $\mathbf F$. So, we can assume that $\overleftarrow{\mathbf E} \subseteq \mathbf V$. 

It follows that $\mathbf F \nsubseteq \mathbf V$. Then there exists an identity $\mathbf u\approx \mathbf v$ that holds in $\mathbf V$ but does not hold in $\mathbf F$. If~\eqref{decomposition of u} is the decomposition of $\mathbf u$ then the decomposition of $\mathbf v$ has the form~\eqref{decomposition of v} by the dual to Corollary~\ref{decompositions of u and v in E}. Lemma~\ref{word problems}(ii) and the dual to Lemma~\ref{word problems}(i) imply that one of the claims~\eqref{h_1(u,x)=h_1(v,x)} and~\eqref{h_2(u,x)=h_2(v,x)} is false. Then there are a letter $x\in\mul(\mathbf u)$ and $k\in\{1,2\}$ such that $h_k(\mathbf u,x)\ne h_k(\mathbf v, x)$. If $k=1$ then we multiply the identity $\mathbf u\approx \mathbf v$ by $xt$ on the left where $t\notin\con(\mathbf u)$. So, we can believe that $k=2$. Suppose that $h_2(\mathbf u,x)=t_i$ and $h_2(\mathbf v,x)=t_j$ where $i\ne j$. We can assume without any loss that $i>j$. Then $\mathbf V$ satisfies the identity
$$
\mathbf u(x,t_i)=xt_ix^p\approx x^qt_ix^r=\mathbf v(x,t_i)
$$
where $p\ge1$, $q\ge 2$ and $r\ge 0$. If $r<2$ or $p<2$ then we multiply this identity by $x^2$ on the right. Taking into account the identity~\eqref{xx=xxx}, we get that $\mathbf V$ satisfies the identity
\begin{equation}
\label{xyxx=xxyxx}
xyx^2\approx x^2yx^2.
\end{equation}
Then the identities 
$$
xyzx\stackrel{\eqref{xyx=xyxx}}\approx xyzx^2\stackrel{\eqref{xyxx=xxyxx}}\approx x^2yzx^2\stackrel{\eqref{xyxztx=xyxzxtx}}\approx x^2yxzx^2\stackrel{\eqref{xyxx=xxyxx}}\approx xyxzx^2\stackrel{\eqref{xyx=xyxx}}\approx xyxzx
$$
hold in $\mathbf V$. In view of Lemma~\ref{basis for E vee dual E}, we have that $\mathbf V\subseteq\mathbf E\vee\overleftarrow{\mathbf E}$. It is proved in~\cite[Section~5]{Lee-08} that the lattice $L(\mathbf E\vee\overleftarrow{\mathbf E})$ has the form shown in Fig.~\ref{pic L(J)}. This fact implies that $\mathbf V$ is one of the varieties $\overleftarrow{\mathbf E}$ and $\mathbf E\vee\overleftarrow{\mathbf E}$. 
\end{proof}

\begin{lemma}
\label{w=v_1aav_2v_3}
Let $\mathbf w=\mathbf v_1a\mathbf v_2a\mathbf v_3$ where $\mathbf v_1$, $\mathbf v_2$ and $\mathbf v_3$ are possibly empty words. Suppose that $\con(\mathbf v_2)\subseteq \mul(\mathbf w)$. Then $\mathbf J$ satisfies the identity $\mathbf w\approx\mathbf v_1a^2\mathbf v_2\mathbf v_3$.
\end{lemma}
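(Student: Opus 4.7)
The plan is to exhibit, via the deduction Lemma~\ref{deduction}, a sequence of words starting from $\mathbf w$ and ending at $\mathbf v_1 a^2 \mathbf v_2 \mathbf v_3$, each step obtained by applying one of $\mathbf J$'s defining identities in an appropriate context. The workhorse will be the family $\mathbf w_n[\pi]\approx\mathbf w_n'[\pi]$, because it has precisely the ``merge the two $x$'s'' shape we want, with the built-in requirement that the letters between the two $x$'s must all reappear, in some order, in the tail after the second $x$.

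Write $\mathbf v_2 = c_1 c_2 \cdots c_n$ as a word in letters. In the favourable situation where every letter of $\mathbf v_2$ is already present in $\mathbf v_3$ (with sufficient multiplicity), the identity can be applied directly: fix a permutation $\pi\in S_n$ and a decomposition $\mathbf v_3 = \mathbf T_1 c_{\pi^{-1}(1)} \mathbf T_2 c_{\pi^{-1}(2)} \cdots \mathbf T_n c_{\pi^{-1}(n)} \mathbf b$ that matches the order in which the required occurrences appear in $\mathbf v_3$, and take the endomorphism $\xi\in\End(F^1)$ with $\xi(x)=a$, $\xi(z_i)=c_i$ and $\xi(t_i)=\mathbf T_i$. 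A direct check gives $\mathbf w = \mathbf v_1\,\xi(\mathbf w_n[\pi])\,\mathbf b$ and $\mathbf v_1 a^2 \mathbf v_2 \mathbf v_3 = \mathbf v_1\,\xi(\mathbf w_n'[\pi])\,\mathbf b$, so Lemma~\ref{deduction} disposes of this case.

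The main obstacle is that some letter $c\in\con(\mathbf v_2)$ may fail to occur in $\mathbf v_3$; the hypothesis $\con(\mathbf v_2)\subseteq\mul(\mathbf w)$ then only guarantees a further occurrence of $c$ in $\mathbf v_1$ or elsewhere inside $\mathbf v_2$. Before invoking $\mathbf w_n[\pi]\approx\mathbf w_n'[\pi]$ one must therefore preprocess $\mathbf w$ using the identities of $\Phi$ together with the derived identity $x^2\approx x^3$ (obtained from $xyx\approx xyx^2$ by putting $y\mapsto\lambda$) so as to bring $\mathbf w$ into a shape to which the argument of the previous paragraph applies. A representative model derivation, for $\mathbf v_2 = c^2$, is
$$
ac^2a \approx ac^2a^2 \approx a\cdot a^2c^2 = a^3c^2 \approx a^2c^2,
$$
using successively $xyx\approx xyx^2$, then $x^2y^2\approx y^2x^2$ on the middle factor, then $x^2\approx x^3$; and the closely related word $caca$ is handled by first applying $xyzxy\approx yxzxy$ (with $z\mapsto\lambda$) to rewrite $caca\approx acca$ and then running the template above.

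I expect the full argument to proceed by induction on $|\mathbf v_2|$ or on $|\con(\mathbf v_2)\setminus\con(\mathbf v_3)|$. At each inductive step one uses a local $\Phi$-rewrite either to pair up two adjacent occurrences of the same letter (so that $x^2y^2\approx y^2x^2$ becomes applicable and the offending letter can be pushed to the other side of the $a$'s) or to swap two first occurrences of multiple letters (so that a ``missing'' letter eventually lies to the right of the second $a$), after which $\mathbf w_n[\pi]\approx\mathbf w_n'[\pi]$ closes the argument. The bookkeeping to check that the hypothesis $\con(\mathbf v_2)\subseteq\mul(\mathbf w)$ is preserved throughout is routine because none of the identities involved changes the multiset of letters of the word; the finicky part will be choosing the order in which the problematic letters are dealt with so that the induction actually terminates in a word for which the direct substitution works.
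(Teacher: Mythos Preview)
Your broad plan matches the paper's: induct on $|\con(\mathbf v_2)\setminus\con(\mathbf v_3)|$, and in the base case substitute directly into $\mathbf w_n[\pi]\approx\mathbf w_n'[\pi]$ (after padding $\mathbf v_3$ via $xyx\approx xyx^2$ to guarantee enough occurrences). One small slip: with your conventions the substitution should be $\xi(z_i)=c_{\pi^{-1}(i)}$ rather than $\xi(z_i)=c_i$, but the idea is right.

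The genuine gap is in the induction step. You propose to preprocess with $\Phi$ alone (plus $x^2\approx x^3$) to push each offending letter past the second $a$, and only then invoke $\mathbf w_n[\pi]\approx\mathbf w_n'[\pi]$. Your two model computations work because the missing letter already sits adjacent to an $a$; in general it does not. Take $\mathbf w=c\,a\,cd\,a\,t_1d$ with $\mathbf v_2=cd$ and $X=\{c\}$: the letter $d$ separates the second $c$ from the second $a$, and that occurrence of $d$ is a \emph{first} occurrence, so $xyx\approx xyx^2$ cannot duplicate it there to enable $x^2y^2\approx y^2x^2$, while $xyzxy\approx yxzxy$ only swaps a leftmost adjacent pair and has no instance to grab here. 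It is not at all clear how $\Phi$ alone carries out the preprocessing you envisage.

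What the paper does instead is use the \emph{induction hypothesis itself} as a rewriting tool inside the inductive step. Choosing the rightmost $x\in\con(\mathbf v_2)\setminus\con(\mathbf v_3)$ and writing $\mathbf v_2=\mathbf v_2'x\mathbf v_2''$ (so $\con(\mathbf v_2'')\subseteq\con(\mathbf v_3)$), one runs, modulo applications of $xyx\approx xyx^2$,
\[
\mathbf v_1a\mathbf v_2'x\mathbf v_2''a\mathbf v_3
\approx\mathbf v_1a\mathbf v_2'x\mathbf v_2''xa\mathbf v_3
\approx\mathbf v_1a\mathbf v_2'x\mathbf v_2''ax\mathbf v_3
\approx\mathbf v_1a^2\mathbf v_2'x\mathbf v_2''x\mathbf v_3
\approx\mathbf v_1a^2\mathbf v_2\mathbf v_3,
\]
where the first and last steps are the lemma with $|X|=0$ (taking $x$ as the distinguished letter and $\mathbf v_2''$ as the middle block), the second step is $x^2y^2\approx y^2x^2$, and the third is the lemma at $|X|-1$ (since now $x$ lies in the tail $x\mathbf v_3$). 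Thus applications of $\mathbf w_n[\pi]\approx\mathbf w_n'[\pi]$ are interleaved with the $\Phi$-moves through the recursion, not saved for a single terminal step; once you set it up this way the ``finicky bookkeeping'' you anticipate disappears.
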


\begin{proof}
Put $X=\con(\mathbf v_2) \setminus\con(\mathbf v_3)$. We use induction on the cardinality of the set $X$ and aim to verify that $\mathbf J$ satisfies the identity $\mathbf w\approx\mathbf v_1a^2\mathbf v_2\mathbf v_3$.

\smallskip
 
\emph{Induction base}. Let the set $X$ be empty. Then $\con(\mathbf v_2)\subseteq\con(\mathbf v_3)$. We can believe that $\occ_x(\mathbf v_2)\le\occ_x(\mathbf v_3)$ for every $x\in\con(\mathbf v_2)$ because $\mathbf J$ satisfies the identity~\eqref{xyx=xyxx}. Then we can rename the letters and assume that $\mathbf v_2=z_{1\pi}z_{2\pi}\cdots z_{n\pi}$ for some $n$ and $\pi\in S_n$ and the latest occurrence of $z_i$ precedes the latest occurrence of $z_j$ in $\mathbf v_3$ whenever $i<j$. It follows that there are letters $t_1,t_2,\dots,t_n\notin \con(\mathbf w)$, a mapping $\xi$ from $\{t_1,t_2,\dots,t_n\}$ to $F^1$ and a word $\mathbf v\in F^1$ such that
\begin{align*}
\mathbf v_1a\mathbf v_2\,a\,\mathbf v_3={}&\mathbf v_1a\,z_{1\pi}z_{2\pi}\cdots z_{n\pi}\, a\,\biggl(\,\prod_{i=1}^n \xi(t_i)z_i\biggr)\mathbf v,\\
\mathbf v_1a^2\mathbf v_2\,\mathbf v_3={}&\mathbf v_1a^2\,z_{1\pi}z_{2\pi}\cdots z_{n\pi}\,\biggl(\,\prod_{i=1}^n \xi(t_i)z_i\biggr)\mathbf v.
\end{align*}
We can extend $\xi$ to an endomorphis of $F^1$ so that $\xi(z_i)=z_i$ for all $i=1,2,\dots,n$. So, we can assume without any loss that $\xi\in\End(F^1)$ and $\xi(z_i)=z_i$ for all $i=1,2,\dots,n$. Then
$$
\mathbf w =\mathbf v_1a\mathbf v_2\,a\,\mathbf v_3=\mathbf v_1\,\xi(\mathbf w_n[\pi])\,\mathbf v\ \text{ and }
\mathbf v_1a^2\mathbf v_2\,\mathbf v_3=\mathbf v_1\,\xi(\mathbf w_n'[\pi])\,\mathbf v,
$$
whence $\mathbf J$ satisfies the identity $\mathbf w=\mathbf v_1a^2\mathbf v_2\mathbf v_3$.

\smallskip
 
\emph{Induction step}. Let now $X$ be non-empty. Then there is a letter $x\in\con(\mathbf v_2)$ and the possibly empty words $\mathbf v_2'$ and $\mathbf v_2''$ such that $\mathbf v_2=\mathbf v_2'x\mathbf v_2''$, $\con(\mathbf v_2'')\subseteq \con(\mathbf v_3)$ and $x\notin\con(\mathbf v_3)$. Taking into account that $x\in\mul(\mathbf w)$, we get that $x\in\con(\mathbf v_1a\mathbf v_2')$. Then $\mathbf J$ satisfies the identities
\begin{align*}
\mathbf w{}&=\mathbf v_1a\mathbf v_2'x\mathbf v_2''a\mathbf v_3\\
&\approx\mathbf v_1a\mathbf v_2'x^2\mathbf v_2''a\mathbf v_3&&\textup{by the identity~\eqref{xyx=xyxx}}\\
&\approx \mathbf v_1a\mathbf v_2'x\mathbf v_2''xa\mathbf v_3&&\textup{by the induction assumption}\\
&\approx \mathbf v_1a\mathbf v_2'x\mathbf v_2''x^2a^2\mathbf v_3&&\textup{by the identity~\eqref{xyx=xyxx}}\\
&\approx \mathbf v_1a\mathbf v_2'x\mathbf v_2''a^2x^2\mathbf v_3&&\textup{by the identity~\eqref{xxyy=yyxx}}\\
&\approx \mathbf v_1a\mathbf v_2'x\mathbf v_2''ax\mathbf v_3&&\textup{by the identity~\eqref{xyx=xyxx}}\\
&\approx \mathbf v_1a^2\mathbf v_2'x\mathbf v_2''x\mathbf v_3&&\textup{by the induction assumption}\\
&\approx \mathbf v_1a^2\mathbf v_2'x^2\mathbf v_2''\mathbf v_3&&\textup{by the induction assumption}
\end{align*}
\begin{align*}
&\approx \mathbf v_1a^2\mathbf v_2'x\mathbf v_2''\mathbf v_3&&\textup{by the identity~\eqref{xyx=xyxx}}\\
&=\mathbf v_1a^2\mathbf v_2\mathbf v_3,
\end{align*}
and we are done.
\end{proof}

Let $\mathbf w$ be a word and $x,y\in \mul(\mathbf w)$. Suppose that $t_0\mathbf w_0 t_1 \mathbf w_1 \cdots t_m \mathbf w_m$ is the decomposition of $\mathbf w$ and $t_i=h_2(\mathbf w,x)$, $t_{i'}=h_2(\mathbf w,y)$, $t_j=t(\mathbf w,x)$, $t_{j'}=t(\mathbf w,y)$ for some $0\le i\le j\le m$ and $0\le i'\le j'\le m$. The letters $x$ and $y$ are said to be \emph{integrated} in the word $\mathbf w$ if either $i'\le i\le j'$ or $i\le i'\le j$.

\begin{lemma}
\label{w=w'abw''}
Let $\mathbf w=\mathbf w'ab\mathbf w''$ where $\mathbf w'$ and $\mathbf w''$ are possibly empty words, $a$ and $b$ are multiple letters in $\mathbf w$. Suppose that one of the following holds:
\begin{itemize}
\item[\textup{(i)}] the letters $a$ and $b$ are integrated in $\mathbf w$;
\item[\textup{(ii)}] $\mathbf w'=\mathbf v_1b\mathbf v_2$ for some words $\mathbf v_1$ and $\mathbf v_2$ such that $\con(\mathbf v_2)\subseteq \mul(\mathbf w)$.
\end{itemize}
Then $\mathbf J$ satisfies the identity $\mathbf w\approx\mathbf w'ba\mathbf w''$.
\end{lemma}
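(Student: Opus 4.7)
My plan is to treat parts (ii) and (i) separately: (ii) will follow from a double application of Lemma \ref{w=v_1aav_2v_3}, while (i) requires a case analysis guided by the integrated hypothesis. For (ii), I write $\mathbf w = \mathbf v_1 b \mathbf v_2 \cdot ab \cdot \mathbf w''$ and view the $b$ inside $\mathbf v_1 b \mathbf v_2$ together with the $b$ of the middle pair as two occurrences of the same letter, with interior $\mathbf v_2 a$. Since $\con(\mathbf v_2) \subseteq \mul(\mathbf w)$ by hypothesis and $a \in \mul(\mathbf w)$, the content of this interior lies in $\mul(\mathbf w)$, so Lemma \ref{w=v_1aav_2v_3} yields $\mathbf w \approx \mathbf v_1 b^2 \mathbf v_2 a \mathbf w''$. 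Reading Lemma \ref{w=v_1aav_2v_3} from right to left then splits the $b^2$ around $\mathbf v_2$ to produce $\mathbf v_1 b \mathbf v_2 b a \mathbf w'' = \mathbf w' ba \mathbf w''$; the content condition for this second application is again $\con(\mathbf v_2) \subseteq \mul(\mathbf w)$, which remains valid because the multiset of letters is unchanged.

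For part (i), I may assume without loss of generality that $h_2(\mathbf w, a) \le h_2(\mathbf w, b) \le t(\mathbf w, a)$, the other integrated configuration being symmetric. I then split into sub-cases based on whether the middle $b$ is the first occurrence of $b$ in $\mathbf w$ and whether the middle $a$ is the last occurrence of $a$. When the middle $b$ is not the first $b$ and there is an earlier $b$ such that the interior factor has content in $\mul(\mathbf w)$, the conclusion follows from part (ii) directly. Otherwise, the integrated hypothesis supplies extra occurrences of $a$ and $b$ in the appropriate range: for instance, when the middle $b$ is the first $b$, the inequality $h_2(\mathbf w, a) \le h_2(\mathbf w, b)$ forces a second $a$ to appear between the middle $ab$ and the second $b$, yielding a factor of the form $ab \cdots ab$ in $\mathbf w$. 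After possibly adjusting this factor via $xyx \approx xyx^2$ from $\Phi$ to bring a copy of $a$ immediately before the second $b$, the identity $xyzxy \approx yxzxy$ from $\Phi$ swaps the middle pair directly. A parallel dual argument handles the sub-cases in which the middle $a$ is the last $a$.

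The principal obstacle will be verifying, in each sub-case of (i), that the content of the interior factor arising in any invocation of Lemma \ref{w=v_1aav_2v_3} lies inside $\mul(\mathbf w)$, since simple letters between successive occurrences of $a$ or $b$ can block this. The integrated hypothesis is exactly what ensures that enough double occurrences of $a$ and $b$ are available in the right range—specifically between the second occurrence of one letter and the last occurrence of the other—to route around any such obstruction, possibly with the further help of $xyxztx \approx xyxzxtx$.
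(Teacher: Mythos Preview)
Your treatment of part~(ii) is correct and essentially identical to the paper's: both apply Lemma~\ref{w=v_1aav_2v_3} twice, first collapsing $b\,\mathbf v_2 a\,b$ to $b^2\mathbf v_2 a$ and then re-expanding to $b\,\mathbf v_2\,ba$.

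For part~(i) your overall strategy --- manufacture an $ab\cdots ab$ factor and then invoke $xyzxy\approx yxzxy$, with Lemma~\ref{w=v_1aav_2v_3} and $xyxztx\approx xyxzxtx$ as auxiliary tools --- is also the paper's strategy, but your case analysis contains a genuine gap. The assertion ``when the middle $b$ is the first $b$, the inequality $h_2(\mathbf w,a)\le h_2(\mathbf w,b)$ forces a second $a$ to appear between the middle $ab$ and the second~$b$'' is false when the two dividers coincide. Take $\mathbf w=abcba$ with $c\in\simple(\mathbf w)$: here $h_2(\mathbf w,a)=h_2(\mathbf w,b)=t(\mathbf w,a)=c$, the middle $b$ is the first $b$ and the middle $a$ is not the last $a$, yet the second $b$ precedes the second $a$, so $\mathbf w$ contains no factor $ab\cdots ab$ whatsoever. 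Neither $xyx\approx xyx^2$ alone nor a direct appeal to part~(ii) produces one, so your sketch does not cover this case. The sub-case ``middle $b$ is not the first $b$ but every earlier $b$ is separated from the middle one by a simple letter'' is likewise left unaddressed.

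The paper closes these gaps by organizing the argument differently. It first disposes of $a,b\in\con(\mathbf w')$ via $x^2y^2\approx y^2x^2$, then reduces to $a\notin\con(\mathbf w')$ and $b\in\con(\mathbf w'')$. If some occurrence of $b$ lies between the second and last occurrences of $a$, the identity $xyxztx\approx xyxzxtx$ inserts an $a$ immediately before that $b$, creating $ab\cdots ab$ directly. Otherwise the integrated hypothesis forces the last occurrence of one letter and the second occurrence of the other to lie in the \emph{same block}; the paper then uses $xyx\approx xyx^2$ together with Lemma~\ref{w=v_1aav_2v_3} (applied across the multiple-letter gap inside that block) to move a copy of the appropriate letter into position and create the missing $ab\cdots ab$ factor. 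This same-block manoeuvre is precisely what your outline lacks.
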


\begin{proof}
(i) If $a,b\in\con(\mathbf w')$ then
\begin{equation}
\label{w'bbaaw''=w'aabbw''}
\mathbf w= \mathbf w'ab\mathbf w''\stackrel{\eqref{xyx=xyxx}}\approx\mathbf w'a^2b^2\mathbf w''\stackrel{\eqref{xxyy=yyxx}}\approx\mathbf w'b^2a^2\mathbf w''\stackrel{\eqref{xyx=xyxx}}\approx\mathbf w'ba\mathbf w'',
\end{equation}
and we are done. Thus, we can assume without loss of generality that $a\notin \con(\mathbf w')$. Then $a\in \con(\mathbf w'')$. If $b\notin\con(\mathbf w'')$ then $\mathbf J$ satisfies the identities 
$$
\mathbf w=\mathbf w'ab\mathbf w''\stackrel{\eqref{xyx=xyxx}}\approx\mathbf w'ab^2\mathbf w''.
$$
This fact allows us to assume that $b\in\con(\mathbf w'')$.

If there is some occurrence of $b$ between the second and the latest occurrences of $a$ in $\mathbf w$ then $\mathbf w''=\mathbf v_1a\mathbf v_2b\mathbf v_3a\mathbf v_4$ for some possibly empty words $\mathbf v_1,\mathbf v_2,\mathbf v_3,\mathbf v_4$. Then the identities
\begin{align*}
\mathbf w&\stackrel{\phantom{\eqref{xyzxy=yxzxy}}}=\mathbf w'ab\mathbf w'=\mathbf w'ab\mathbf v_1a\mathbf v_2b\mathbf v_3a\mathbf v_4\stackrel{\eqref{xyxztx=xyxzxtx}}\approx
\mathbf w'ab\mathbf v_1a\mathbf v_2ab\mathbf v_3a\mathbf v_4\\
&\stackrel{\eqref{xyzxy=yxzxy}}\approx\mathbf w'ba\mathbf v_1a\mathbf v_2ab\mathbf v_3a\mathbf v_4\stackrel{\eqref{xyxztx=xyxzxtx}}\approx
\mathbf w'ba\mathbf v_1a\mathbf v_2b\mathbf v_3a\mathbf v_4=\mathbf w'ba\mathbf w''
\end{align*}
hold in $\mathbf J$. So, we can assume that there are no occurrences of $b$ between the second and the latest occurrences of $a$ in $\mathbf w$. Analogously, the second and the latest occurrences of $a$ in $\mathbf w$ do not lie between the second and the latest occurrences of $b$ in $\mathbf w$. Then either the latest occurrence of $a$ precedes the second occurrence of $b$ in $\mathbf w$ or the latest occurrence of $b$ precedes the second occurrence of $a$. Since the letters $a$ and $b$ are integrated in the word $\mathbf w$, either the latest occurrence of $a$ and the second occurrence of $b$ in $\mathbf w$ or the latest occurrence of $b$ and the second occurrence of $a$ in $\mathbf w$ lie in the same block. 

If the latest occurrence of $a$ and the second occurrence of $b$ in $\mathbf w$ lie in the same block then $\mathbf w''=\mathbf v_1a\mathbf v_2b\mathbf v_3$ for some possibly empty words $\mathbf v_1,\mathbf v_2,\mathbf v_3$ such that $\con(\mathbf v_2)\subseteq\mul(\mathbf w)$ and $a,b\notin\con(\mathbf v_2)$. Then the identities 
\begin{align*}
\mathbf w&{}=\mathbf w'ab\mathbf v_1a\mathbf v_2b\mathbf v_3\\
&{}\approx\mathbf w'ab\mathbf v_1a^2\mathbf v_2b\mathbf v_3&&\textup{by the identity~\eqref{xyx=xyxx}}\\
&\approx \mathbf w'ab\mathbf v_1a\mathbf v_2ab\mathbf v_3&&\textup{by Lemma~\ref{w=v_1aav_2v_3}}\\
&\approx \mathbf w'ba\mathbf v_1a\mathbf v_2ab\mathbf v_3&&\textup{by the identity~\eqref{xyzxy=yxzxy}}\\
&\approx \mathbf w'ba\mathbf v_1a^2\mathbf v_2b\mathbf v_3&&\textup{by Lemma~\ref{w=v_1aav_2v_3}}
\end{align*}
\begin{align*}
&\approx \mathbf w'ba\mathbf v_1a\mathbf v_2b\mathbf v_3&&\textup{by the identity~\eqref{xyx=xyxx}}\\
&=\mathbf w'ba\mathbf w''
\end{align*}
hold in $\mathbf J$. It follows that $\mathbf J$ satisfies the identity $\mathbf w\approx\mathbf w'ba\mathbf w''$.

The case when the latest occurrence of $b$ and the second occurrence of $a$ in $\mathbf w$ lie in the same block is considered similarly.

\smallskip

(ii) Since $a\in\mul(\mathbf w)$, Lemma~\ref{w=v_1aav_2v_3} implies that $\mathbf J$ satisfies the identities
$$
\mathbf w=\mathbf v_1b\mathbf v_2 ab\mathbf w''\approx \mathbf v_1b^2\mathbf v_2 a\mathbf w''\approx \mathbf v_1b\mathbf v_2 ba\mathbf w''=\mathbf w'ba\mathbf w'',
$$
i.e., the identity $\mathbf w\approx \mathbf w'ba\mathbf w''$.
\end{proof}

The following assertion is evident.

\begin{remark}
\label{subwords of v}
Let 
$
\mathbf v\in \{xzyx^pty^q,\, yx^pty^q,\,xyzx^pty^q\mid p,q\in\mathbb N\}.
$
If $a,b\in\con(\mathbf v)$ and $a\ne b$ then the subword $ab$ of the word $\mathbf v$ has exactly one occurrence in this word.\qed
\end{remark}

\begin{lemma}
\label{w = ... in H,I,J}
Let $p_1$ and $p_2$ be natural numbers.
\begin{itemize}
\item[\textup{(i)}] If the variety $\mathbf J$ satisfies an identity $xzyx^{p_1}ty^{p_2}\approx \mathbf w$ then $\mathbf w = xzyx^{q_1}ty^{q_2}$ for some natural numbers $q_1$ and $q_2$.
\item[\textup{(ii)}] If the variety $\mathbf I$ satisfies an identity $yx^{p_1}ty^{p_2}\approx \mathbf w$ then $\mathbf w = yx^{q_1}ty^{q_2}$ for some natural numbers $q_1$ and $q_2$.
\item[\textup{(iii)}] If the variety $\mathbf H$ satisfies an identity $xyzx^{p_1}ty^{p_2}\approx \mathbf w$ then $\mathbf w = xyzx^{q_1}ty^{q_2}$ for some natural numbers $q_1$ and $q_2$.
\end{itemize}
\end{lemma}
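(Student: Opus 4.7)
The plan is to prove all three parts in parallel by combining Corollary~\ref{word problem F vee dual to E} with an induction along a derivation produced by Lemma~\ref{deduction}. The first step uses the inclusion $\mathbf F\vee\overleftarrow{\mathbf E}\subseteq \mathbf H\subseteq \mathbf I\subseteq \mathbf J$ visible in Proposition~\ref{L(J)} to conclude that the assumed identity already holds in $\mathbf F\vee\overleftarrow{\mathbf E}$. Corollary~\ref{word problem F vee dual to E} then forces $\simple(\mathbf w)=\simple(\mathbf u)$, $\mul(\mathbf w)=\mul(\mathbf u)$, and, for every multiple letter $x$ of $\mathbf u$, it makes $h_1(\mathbf u,x)$, $h_2(\mathbf u,x)$, $t(\mathbf u,x)$ be reproduced in $\mathbf w$. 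Corollary~\ref{decompositions of u and v in E} and its dual then align the decomposition of $\mathbf w$ with that of $\mathbf u$, and short bookkeeping on blocks pins $\mathbf w$ down to $\mathbf w=x\cdot z\cdot x^{a}yx^{b}\cdot t\cdot y^{c}$ in~(i); to $\mathbf w=x^{a}yx^{b}\cdot t\cdot y^{c}$ in~(ii) when $p_1\ge 2$ (the case $p_1=1$ being already concluded at this stage); and to $\mathbf w=\mathbf w_0\cdot z\cdot x^{q_1}\cdot t\cdot y^{q_2}$ with $\mathbf w_0\in\{xy,yx\}$ in~(iii). The only remaining freedom is the internal order inside one critical block: $a\ge 1$ has to be ruled out in~(i) and~(ii), and $\mathbf w_0=yx$ has to be ruled out in~(iii).

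For the second step I invoke Lemma~\ref{deduction} to obtain a derivation $\mathbf v_0,\mathbf v_1,\ldots,\mathbf v_m$ from $\mathbf u$ to $\mathbf w$ via the defining identities of the relevant variety, and prove by induction on $i$ that every $\mathbf v_i$ already has the target form, with only the exponents of $x$ and $y$ changing from step to step. For the inductive step I analyse each defining identity $\mathbf s\approx\mathbf t$ and each $\xi\in\End(F^{1})$ witnessing the step. The key observation is that any letter appearing more than once in $\mathbf s$ must, under $\xi$, produce a word that appears more than once as a factor of $\mathbf v_i$; since the simple letters of $\mathbf v_i$ occur there exactly once, the $\xi$-image of every such repeated letter must lie in $\{x,y\}^{*}$. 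Combined with the rigid decomposition of $\mathbf v_i$ into $\{x,y\}^{*}$-factors separated by the one or two simple letters, this severely constrains where $\xi(\mathbf s)$ can sit as a factor of $\mathbf v_i$; a direct enumeration then shows that every non-trivial placement merely modifies the exponents, leaving the shape intact.

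The main obstacle is part~(i) and its infinite identity family $\mathbf w_n[\pi]\approx\mathbf w_n'[\pi]$, which I have to treat uniformly in $n$ and $\pi$. There the letter $x$ and every $z_i$ occurs twice in $\mathbf w_n[\pi]$, so $\xi(x)$ and all $\xi(z_i)$ must lie in $\{x,y\}^{*}$, whereas only the single-occurrence letters $t_i$ may carry the simple letters $z,t$ of $\mathbf v_i=xzyx^{q_1}ty^{q_2}$. Tracing the possible placements, using that the $\{x,y\}$-maximal factors of $\mathbf v_i$ are only $x$, $yx^{q_1}$ and $y^{q_2}$, one finds only placements in which $\xi(\mathbf w_n[\pi])$ and $\xi(\mathbf w_n'[\pi])$ coincide as words, or placements that simply change $q_1$ or $q_2$ by $\pm 1$, as desired. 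Parts~(ii) and~(iii) are dispatched by the analogous but much shorter enumerations applied to the defining identities $xzxyty\approx xzyxty$ of $\mathbf I$ and $x^{2}yty\approx xyxty\approx yx^{2}ty$ of $\mathbf H$, respectively.
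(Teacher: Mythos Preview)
Your overall strategy coincides with the paper's: reduce via Lemma~\ref{deduction} to a single rewriting step $\mathbf v_i\to\mathbf v_{i+1}$ given by one defining identity, observe that the $\xi$-image of every multiple letter of $\mathbf s$ must avoid the simple letters of $\mathbf v_i$, and then enumerate placements. The paper sharpens your observation slightly via Remark~\ref{subwords of v}: in each of the three words, any two-letter factor $ab$ with $a\ne b$ occurs \emph{exactly once}, so the $\xi$-image of a multiple letter is not merely in $\{x,y\}^*$ but is a power of a single letter $c\in\{x,y\}$. This is what makes the $\mathbf w_n[\pi]\approx\mathbf w_n'[\pi]$ case collapse cleanly (one gets $\xi(z_{1\pi}\cdots z_{n\pi})=c^h$, whence $\xi(\mathbf w_n[\pi])=\xi(\mathbf w_n'[\pi])$); your sketch would reach the same conclusion but via a longer placement analysis. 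The paper also organises (ii) and (iii) by reducing the $\Psi$-part back to (i) (multiply by $xz$ on the left, or erase $z$), rather than repeating the enumeration.

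There is, however, one genuine logical slip: you invoke the inclusions $\mathbf F\vee\overleftarrow{\mathbf E}\subseteq\mathbf H\subseteq\mathbf I\subseteq\mathbf J$ by citing Proposition~\ref{L(J)}, but the proof of Proposition~\ref{L(J)} uses the present lemma (explicitly, to obtain the strict chain $\mathbf F\vee\overleftarrow{\mathbf E}\subset\mathbf H\subset\mathbf I\subset\mathbf J$). What you actually need is only that each defining identity of $\mathbf J$, of $\mathbf I$, and of $\mathbf H$ holds in $\mathbf F\vee\overleftarrow{\mathbf E}$, so that Corollary~\ref{word problem F vee dual to E} applies to each single step $\mathbf v_i\approx\mathbf v_{i+1}$. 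That is an immediate check with Corollary~\ref{word problem F vee dual to E} itself (this is how the paper uses it, never appealing to Proposition~\ref{L(J)}); you should replace the forward reference accordingly.
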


\begin{proof}
Put 
$$
\Psi=\{\Phi,\,xyxztx\approx xyxzxtx,\,\mathbf w_n[\pi]\approx \mathbf w_n'[\pi]\mid n\in \mathbb N,\ \pi\in S_n\}.
$$
 
(i) Put $\mathbf v = xzyx^{p_1}ty^{p_2}$. By Lemma \ref{deduction} and induction, we can reduce our considerations to the case when either $\mathbf v= \mathbf a\xi(\mathbf s)\mathbf b$, $\mathbf w= \mathbf a\xi(\mathbf t)\mathbf b$ or $\mathbf v= \mathbf a\xi(\mathbf t)\mathbf b$, $\mathbf w= \mathbf a\xi(\mathbf s)\mathbf b$ for some $\mathbf a,\mathbf b\in F^1$, $\xi\in\End(F^1)$ and $\mathbf s\approx \mathbf t\in\Psi$. We can assume without loss of generality that the words $\mathbf v$ and $\mathbf w$ are different.

If $\xi(x)$ is the empty word then $\xi(\mathbf s)=\xi(\mathbf t)$, but this is impossible because $\mathbf v\ne\mathbf w$. Thus, $\xi(x)\ne\lambda$. Then, since $\con(\xi(x))\subseteq\mul(\xi(\mathbf s))\subseteq\mul(\mathbf v)$, Remark~\ref{subwords of v} implies that $\xi(x)=c^k$ for some $k\in\mathbb N$ and $c\in\{x,y\}$.

The identity~\eqref{xyxztx=xyxzxtx} allows us to add and delete the occurrences of the letter $x$ between the second and the latest occurrences of this letter, while the identity~\eqref{xyx=xyxx} allows us to add and delete the occurrences of the letter $x$ next to the non-first occurrence of this letter. This implies that if $\mathbf s\approx \mathbf t$ coincides with one of the identities~\eqref{xyx=xyxx} or~\eqref{xyxztx=xyxzxtx} then $\mathbf w = xzyx^{q_1}ty^{q_2}$ for some $q_1,q_2\in\mathbb N$.

Suppose now that $\mathbf s\approx \mathbf t\in\{x^2y^2\approx y^2x^2,\,xyzxy\approx yxzxy\}$. Note that $\xi(y)\ne\lambda$ because the identity $\mathbf v\approx\mathbf w$ is non-trivial. Then $\xi(y)=d^{k'}$ for some letter $d$ and some natural number $k'$ by Remark~\ref{subwords of v}. Since the words $\mathbf v$ and $\mathbf w$ are different, $c\ne d$. Corollary~\ref{word problem F vee dual to E} implies that $\mathbf w =xzx^ryx^{q_1}ty^{q_2}$ for some numbers $q_1,q_2$ and $r$. The case when the identity $\mathbf s\approx \mathbf t$ coincides with the identity~\eqref{xxyy=yyxx} is impossible because the word $\mathbf v$ does not contain any subword of the form $c^{2k}d^{2k'}$ and $d^{2k'}c^{2k}$. The identity $\mathbf s\approx \mathbf t$ can not also coincide with the identity $xyzxy\approx yxzxy$ because the words $\mathbf v$ and $\mathbf w$ may contain at most one occurrence of the word $c^kd^{k'}$.

So, it remains to consider the case when the identity $\mathbf s\approx \mathbf t$ coincides with the identity $\mathbf w_n[\pi]\approx \mathbf w_n'[\pi]$ for some $n\in \mathbb N$ and $\pi\in S_n$. If $\mathbf v= \mathbf a\xi(\mathbf s)\mathbf b$ and $\mathbf w= \mathbf a\xi(\mathbf t)\mathbf b$ then
$$
xzyx^{p_1}ty^{p_2}=\mathbf a\,c^k\xi(z_{1\pi}z_{2\pi}\cdots z_{n\pi})c^k\xi\biggl(\,\prod_{i=1}^n t_iz_i^{\ell_i}\biggr)\,\mathbf b,
$$
whence $\xi(z_{1\pi}z_{2\pi}\cdots z_{n\pi})=c^h$ for some $h\ge0$. This contradicts the fact that the identity $\mathbf v\approx\mathbf w$ is non-trivial. If $\mathbf v= \mathbf a\xi(\mathbf t)\mathbf b$ and $\mathbf w= \mathbf a\xi(\mathbf s)\mathbf b$ then
$$
xzyx^{p_1}ty^{p_2}=\mathbf a\,c^{2k}\xi(z_{1\pi}z_{2\pi}\cdots z_{n\pi})\xi\biggl(\,\prod_{i=1}^n t_iz_i^{\ell_i}\biggr)\,\mathbf b,
$$
whence $\xi(z_{1\pi}z_{2\pi}\cdots z_{n\pi})=c^h$ for some $h\ge0$. We obtain a contradiction with the inequality $\mathbf v\ne\mathbf w$ again.

\smallskip

(ii) Put $\mathbf v = yx^{p_1}ty^{p_2}$. By Lemma \ref{deduction} and induction, we can reduce our considerations to the case when either $\mathbf v= \mathbf a\xi(\mathbf s)\mathbf b$, $\mathbf w= \mathbf a\xi(\mathbf t)\mathbf b$ or $\mathbf v= \mathbf a\xi(\mathbf t)\mathbf b$, $\mathbf w= \mathbf a\xi(\mathbf s)\mathbf b$ for some $\mathbf a,\mathbf b\in F^1$, $\xi\in\End(F^1)$ and $\mathbf s\approx \mathbf t\in\{\Psi,\, xzxyty\approx xzyxty\}$. We can assume without loss of generality that the words $\mathbf v$ and $\mathbf w$ are different.

Suppose that the $\mathbf s\approx \mathbf t\in \Psi$. Corollary~\ref{word problem F vee dual to E} implies that $\mathbf w = x^ryx^{q_1}ty^{q_2}$ for some numbers $q_1,q_2$ and $r$. If $r>0$ then we multiply the identity $\mathbf v\approx \mathbf w$ by $xz$ on the left and obtain a contradiction with the claim~(i).

So, we can assume that $\mathbf s\approx \mathbf t$ equals the identity~\eqref{xzxyty=xzyxty}. Since the identity $\mathbf v\approx \mathbf w$ is non-trivial, we have that $\xi(x)\ne\lambda$, $\xi(y)\ne\lambda$ and $\xi(x)\ne\xi(y)$. Then $\xi(x)=c^k$ and $\xi(y)=d^k$ for some $k,k'\in\mathbb N$ by Remark~\ref{subwords of v}. If $\mathbf v= \mathbf a\xi(\mathbf s)\mathbf b$ and $\mathbf w= \mathbf a\xi(\mathbf t)\mathbf b$ then
$$
\mathbf v=yx^{p_1}ty^{p_2}=\mathbf a c^k\xi(z)c^kd^{k'}\xi(t)d^{k'}\mathbf b.
$$
Then $\{x,y\}=\{c,d\}$. The case when $y=c$ and $x=d$ is impossible because the second occurrence of $y$ is preceded the latest occurrence of $x$ in $\mathbf v$. So, $x=c$ and $y=d$. This implies that $\xi(z)=\lambda$ and, therefore, $yx^{p_1}ty^{p_2}=\mathbf ax^{2k}y^{k'}\xi(t)y^{k'}\mathbf b$, a contradiction. If $\mathbf v= \mathbf a\xi(\mathbf t)\mathbf b$ and $\mathbf w= \mathbf a\xi(\mathbf s)\mathbf b$ then
$$
\mathbf v=yx^{p_1}ty^{p_2}=\mathbf a c^k\xi(z)d^{k'}c^k\xi(t)d^{k'}\mathbf b.
$$
But this equality is impossible too. The claim~(ii) is proved.

\smallskip

(iii) Put $\mathbf v = xyzx^{p_1}ty^{p_2}$. As in the proof of the claims~(i) and~(ii), by Lemma \ref{deduction} and induction, we can reduce our considerations to the case when either $\mathbf v= \mathbf a\xi(\mathbf s)\mathbf b$, $\mathbf w= \mathbf a\xi(\mathbf t)\mathbf b$ or $\mathbf v= \mathbf a\xi(\mathbf t)\mathbf b$, $\mathbf w= \mathbf a\xi(\mathbf s)\mathbf b$ for some $\mathbf a,\mathbf b\in F^1$, $\xi\in\End(F^1)$ and $\mathbf s\approx \mathbf t\in\{\Psi,\, xyxty\approx yx^2ty\}$. We can assume without loss of generality that the words $\mathbf v$ and $\mathbf w$ are different.

Suppose that $\mathbf s\approx \mathbf t\in \Psi$. Corollary~\ref{word problem F vee dual to E} implies that 
$$
\mathbf w \in\{xyzx^{q_1}ty^{q_2},\, yxzx^{q_1}ty^{q_2}\mid q_1,q_2\in\mathbb N\}.
$$
If $\mathbf w= yxzx^{q_1}ty^{q_2}$ for some $q_1,q_2\in\mathbb N$ then we substitute~1 for $z$ in the identity $\mathbf v\approx \mathbf w$ and obtain a contradiction with the claim~(ii).

So, we can assume that $\mathbf s\approx \mathbf t$ coincides with the identity~\eqref{xyxty=yxxty}. Since the identity $\mathbf v\approx \mathbf w$ is non-trivial, we have that $\xi(x)\ne\lambda$, $\xi(y)\ne\lambda$ and $\xi(x)\ne\xi(y)$. Then $\xi(x)=c^k$ and $\xi(y)=d^k$ for some $k,k'\in\mathbb N$ by Remark~\ref{subwords of v}. If $\mathbf v= \mathbf a\xi(\mathbf s)\mathbf b$ and $\mathbf w= \mathbf a\xi(\mathbf t)\mathbf b$ then
$$
\mathbf v= xyzx^{p_1}ty^{p_2}=\mathbf a c^kd^{k'}c^k\xi(t)d^{k'}\mathbf b.
$$
It is easy to see that this equality is impossible. If $\mathbf v= \mathbf a\xi(\mathbf t)\mathbf b$ and $\mathbf w= \mathbf a\xi(\mathbf s)\mathbf b$ then
$$
\mathbf v= xyzx^{p_1}ty^{p_2}=\mathbf a d^{k'}c^{2k}\xi(t)d^{k'}\mathbf b.
$$
But this equality is impossible too. The claim~(iii) is proved.
\end{proof}

If $x\in \con(\mathbf w)$ and $i\le \occ_x(\mathbf w)$ then $\ell_i(\mathbf w,x)$ denotes the length of the minimal prefix $\mathbf p$ of $\mathbf w$ with $\occ_x(\mathbf p)=i$.

\begin{proof}[Proof of Proposition~\ref{L(J)}.]
In view of Lemma~\ref{L(F vee dual E) and [F vee dual E,X]}, it remains to verify that the interval $[\mathbf F\vee\overleftarrow{\mathbf E},\mathbf J]$ is the chain $\mathbf F\vee\overleftarrow{\mathbf E}\subset\mathbf H\subset\mathbf I\subset\mathbf J$. Lemmas~\ref{basis for F vee dual E} and~\ref{w = ... in H,I,J} imply that $\mathbf F\vee\overleftarrow{\mathbf E}\subset\mathbf H\subset\mathbf I\subset\mathbf J$. So, it remains to verify that if $\mathbf V$ is a monoid variety such that $\mathbf F\vee\overleftarrow{\mathbf E}\subseteq\mathbf V\subset\mathbf J$ then $\mathbf V$ coincides with one of the varieties $\mathbf F\vee\overleftarrow{\mathbf E}$, $\mathbf H$ or $\mathbf I$. Since $\mathbf V\subset \mathbf J$, there is a non-trivial identity $\mathbf u\approx \mathbf v$ that holds in $\mathbf V$ but does not hold in $\mathbf J$. The identities~\eqref{xyx=xyxx} and~\eqref{xyxztx=xyxzxtx} allow us to assume that the claim~\eqref{mul=con3 for u and v} is true. In view of Corollary~\ref{decompositions of u and v in E} and inclusion $\mathbf E\subseteq\mathbf V$, if~\eqref{decomposition of u} is the decomposition of $\mathbf u$ then the decomposition of $\mathbf v$ has the form~\eqref{decomposition of v}. This fact, Corollary~\ref{word problem F vee dual to E} and the claim~\eqref{mul=con3 for u and v} imply that the claim~\eqref{con_j(u_i)=con_j(v_i)} is true. Since the identity $\mathbf u\approx \mathbf v$ is non-trivial, there is $0\le i\le m$ such that $\mathbf u_i\ne \mathbf v_i$. Let $\mathbf p$ be the greatest common prefix of the words $\mathbf u_i$ and $\mathbf v_i$. Suppose that $\mathbf u_i=\mathbf px\mathbf u_i'$ for some letter $x$ and some word $\mathbf u_i'$. The claim~\eqref{con_j(u_i)=con_j(v_i)} implies that there are words $\mathbf a,\mathbf b$ and the letter $y$ such that $\mathbf v_i=\mathbf p\mathbf ayx\mathbf b$ and $x\notin\con(\mathbf a y)$. We note also that $y\in\con(\mathbf u_i')$ by the claim~\eqref{con_j(u_i)=con_j(v_i)}.

By induction we can assume without loss of generality that $\mathbf J$ violates the identity
\begin{equation}
\label{..ayxb..=..axyb..}
\mathbf v\approx \mathbf v'\,\mathbf p\mathbf a\,xy\,\mathbf b\,\mathbf v'',
\end{equation}
where $\mathbf v'= \mathbf v_0t_1\mathbf v_1t_2\cdots \mathbf v_{i-1}t_i$ and $\mathbf v''= t_{i+1}\mathbf v_{i+1}\cdots t_m\mathbf v_m$. Then the letters $x$ and $y$ are non-integrated in the word $\mathbf v$ by Lemma~\ref{w=w'abw''}(i). Then either the third occurrence of $x$ precedes the second occurrence of $y$ in $\mathbf v$ or the third occurrence of $y$ precedes the second occurrence of $x$.

\medskip

\emph{Case} 1: the third occurrence of $x$ precedes the second occurrence of $y$ in $\mathbf v$. Then there is $j$ such that $\ell_3(\mathbf v, x)<\ell_1(\mathbf v, t_j)<\ell_2(\mathbf v, y)$. In view of Corollary~\ref{word problem F vee dual to E}, $\ell_3(\mathbf u, x)<\ell_1(\mathbf u, t_j)<\ell_2(\mathbf u, y)$. It follows that $y\notin\con(\mathbf v'\mathbf p\mathbf a)$.

\smallskip

First, we are going to verify that $\mathbf V\subseteq\mathbf I$. Suppose that $x\notin\con(\mathbf v'\mathbf p)$. Then $\mathbf V$ satisfies the identities 
$$
x^syx^rt_jy^2=\mathbf u(x,y,t_j)\approx \mathbf v(x,y,t_j)=yx^3ty^2
$$
for some $s\ge 1$ and $r\ge 0$. Now we substitute $xt$ for $t_j$ in these identities and obtain the identity $x^syx^{r+1}ty^2\approx yx^4ty^2$. Then, since the identity 
\begin{equation}
\label{xyxty=xxyty}
xyxty\approx x^2yty
\end{equation} 
holds in the variety $\mathbf V$, this variety satisfies
$$
xyxty \stackrel{\eqref{xyx=xyxx}}\approx xyx^{s+r}ty^2 \stackrel{\eqref{xyxty=xxyty}}\approx x^syx^{r+1}ty^2\approx yx^4ty^2\stackrel{\eqref{xyx=xyxx}}\approx yx^2ty,
$$
i.e., the identity
\begin{equation}
\label{xyxty=yxxty}
xyxty\approx yx^2ty.
\end{equation}
Clearly, the identity~\eqref{xzxyty=xzyxty} follows from the identities~\eqref{xyx=xyxx} and~\eqref{xyxty=yxxty}, whence $\mathbf V\subseteq \mathbf I$.

Suppose now that $x\in\simple(\mathbf v'\mathbf p)$. Then Lemma~\ref{w=w'abw''}(ii) and the fact that $\mathbf J$ violates the identity~\eqref{..ayxb..=..axyb..} imply that the first and the second occurrences of $x$ in $\mathbf v$ lie in different blocks in $\mathbf v$, whence $\ell_1(\mathbf v, x)<\ell_1(\mathbf v,t_i)$. Taking into account Corollary~\ref{word problem F vee dual to E}, we have that $\ell_1(\mathbf u, x)<\ell_1(\mathbf u,t_i)$. If the third occurrence of $x$ precedes the first occurrence of $y$ in $\mathbf u$ then $\mathbf V$ satisfies the identities
$$
xt_ixyt_jy\!\stackrel{\eqref{xyx=xyxx}}\approx\! xt_ix^2yt_jy^2=\mathbf u(x,y,t_i,t_j)\approx \mathbf v(x,y,t_i,t_j)=xt_iyx^2t_jy^2\!\stackrel{\eqref{xyx=xyxx}}\approx\! xt_iyxt_jy.
$$
We rename the letters in these identities and obtain that the identity~\eqref{xzxyty=xzyxty} holds in $\mathbf V$. If the third occurrence of $x$ is preceded the first occurrence of $y$ in $\mathbf u$ then $\mathbf V$ satisfies
\begin{align*}
xt_ixyt_jy&\stackrel{\eqref{xyx=xyxx}}\approx xt_ix^2yt_jy^2\stackrel{\eqref{xyxty=xxyty}}\approx xt_ixyxt_jy^2=\mathbf u(x,y,t_i,t_j)\\ 
&\stackrel{\phantom{\eqref{xyx=xyxx}}}\approx\mathbf v(x,y,t_i,t_j)=xt_iyx^2t_jy^2\stackrel{\eqref{xyx=xyxx}}\approx xt_iyxt_jy,
\end{align*}
i.e., the identity~\eqref{xzxyty=xzyxty}.

Finally, suppose that $x\in\con_2(\mathbf v'\mathbf p)$. Then the identities
$$
\mathbf v'\,\mathbf p\mathbf a\,yx\,\mathbf b\,\mathbf v''\stackrel{\eqref{xyxztx=xyxzxtx}}\approx \mathbf v'\,\mathbf p\mathbf a\,xyx\,\mathbf b\,\mathbf v''\stackrel{\eqref{xyxty=xxyty}}\approx \mathbf v'\,\mathbf p\mathbf a\,x^2y\,\mathbf b\,\mathbf v''\stackrel{\eqref{xyx=xyxx}}\approx \mathbf v'\,\mathbf p\mathbf a\,xy\,\mathbf b\,\mathbf v''=\mathbf v
$$ 
hold in the variety $\mathbf J$. We obtain a contradiction with the fact that this variety violates the identity~\eqref{..ayxb..=..axyb..}. So, we have proved that $\mathbf V\subseteq \mathbf I$.

\smallskip

Suppose now that $\mathbf V\subset\mathbf I$. Then we can believe that the identity~\eqref{..ayxb..=..axyb..} does not hold in the variety $\mathbf I$. We will prove that $\mathbf V\subseteq\mathbf H$. Note that the identity~\eqref{xzxyty=xzyxty} allows us to swap a non-latest occurrence and a non-first occurrence of two multiple letters whenever these occurrences are adjacent to each other. Hence $x\notin\con(\mathbf v'\mathbf p)$. Then we can verify that $\mathbf V$ satisfies the identity~\eqref{xyxty=yxxty} (see the second paragraph of Case~1), and therefore, is contained in $\mathbf H$.

\smallskip

Suppose now that $\mathbf V\subset\mathbf H$. Then we can believe that the identity~\eqref{..ayxb..=..axyb..} does not hold in the variety $\mathbf H$. We will prove that $\mathbf V=\mathbf F\vee\overleftarrow{\mathbf E}$. Arguments similar to those from the previous paragraph imply that $x\notin\con(\mathbf v'\mathbf p)$. If the first and the second occurrence of $x$ in $\mathbf v$ lie in the same block then $\mathbf b=\mathbf b'x\mathbf b''$ for some words $\mathbf b'$ and $\mathbf b''$. Then we have:
\begin{align*}
\mathbf v'\mathbf p\mathbf ayx\mathbf b' x\mathbf b''\mathbf v''&{}\approx\mathbf v'\mathbf p\mathbf ayx^2\mathbf b'\mathbf b''\mathbf v''&&\textup{by Lemma~\ref{w=v_1aav_2v_3}}\\
&\approx \mathbf v'\mathbf p\mathbf axyx\mathbf b'\mathbf b''\mathbf v''&&\textup{by the identity~\eqref{xyxty=yxxty}}\\
&\approx \mathbf v'\mathbf p\mathbf ax^2y\mathbf b'\mathbf b''\mathbf v''&&\textup{by the identity~\eqref{xyxty=xxyty}}\\
&\approx \mathbf v'\mathbf p\mathbf axy\mathbf b'x\mathbf b''\mathbf v''.&&\textup{by Lemma~\ref{w=v_1aav_2v_3}}.
\end{align*}
This contradicts the fact that the variety $\mathbf H$ violates the identity~\eqref{..ayxb..=..axyb..}. Therefore, the first and the second occurrence of $x$ in $\mathbf v$ lie in different blocks. Then there is $i<k<j$ such that $\ell_1(\mathbf v, t_k)<\ell_2(\mathbf v,x)$. In view of Corollary~\ref{word problem F vee dual to E}, $\ell_1(\mathbf u, t_k)<\ell_2(\mathbf u,x)$. Then $\mathbf V$ satisfies the identities
$$
xyt_kxt_jy\!\stackrel{\eqref{xyx=xyxx}}\approx\! xyt_kx^2t_jy^2\!=\!\mathbf u(x,y,t_i,t_j)\!\approx\! \mathbf v(x,y,t_i,t_j)\!=\!yxt_kx^2t_jy^2\!\stackrel{\eqref{xyx=xyxx}}\approx\! yxt_kxt_jy.
$$
So, the identity~\eqref{xyzxty=yxzxty} holds in $\mathbf V$. Hence $\mathbf V=\mathbf F\vee\overleftarrow{\mathbf E}$ by Lemma~\ref{basis for F vee dual E}.

\medskip

\emph{Case} 2: the third occurrence of $y$ precedes the second occurrence of $x$ in $\mathbf v$. This case is considered similarly to the previous one. Then there is $j$ such that $\ell_3(\mathbf v, y)<\ell_1(\mathbf v, t_j)<\ell_2(\mathbf v, x)$. In view of Corollary~\ref{word problem F vee dual to E}, $\ell_3(\mathbf u, y)<\ell_1(\mathbf u, t_j)<\ell_2(\mathbf u, x)$.

\smallskip

First, we are going to verify that $\mathbf V\subseteq\mathbf I$. Suppose that $y\notin\con(\mathbf v'\mathbf p\mathbf a)$. Then $\mathbf V$ satisfies the identities 
$$
xy^2t_jx \stackrel{\eqref{xyx=xyxx}}\approx xy^3t_jx^2=\mathbf u(x,y,t_j)\approx \mathbf v(x,y,t_j)=yxy^2tx^2\stackrel{\eqref{xyx=xyxx}}\approx yxytx.
$$
We rename the letters in these identities and obtain that the identity~\eqref{xyxty=yxxty} holds in $\mathbf V$. Clearly, the identity~\eqref{xzxyty=xzyxty} follows from the identity~\eqref{xyxty=yxxty}, whence $\mathbf V\subseteq \mathbf I$.

Suppose now that $y\in\simple(\mathbf v'\mathbf p\mathbf a)$. Then Lemma~\ref{w=w'abw''}(i) and the fact that $\mathbf J$ violates the identity~\eqref{..ayxb..=..axyb..} imply that $x\notin\con(\mathbf v'\mathbf p\mathbf a\mathbf b)$. Besides that, Lemma~\ref{w=w'abw''}(ii) and the fact that $\mathbf J$ violates the identity~\eqref{..ayxb..=..axyb..} imply that the first and the second occurrences of $y$ in $\mathbf v$ lie in different blocks in $\mathbf v$, whence $\ell_1(\mathbf v, y)<\ell_1(\mathbf v,t_i)$. Taking into account Corollary~\ref{word problem F vee dual to E}, we have that $\ell_1(\mathbf u, y)<\ell_1(\mathbf u,t_i)$. Then $\mathbf V$ satisfies the identities
\begin{align*}
yt_ixyt_jx&\stackrel{\eqref{xyx=xyxx}}\approx yt_ixy^2t_jx^2=\mathbf u(x,y,t_i,t_j)\approx\mathbf v(x,y,t_i,t_j)\\
&\stackrel{\phantom{\eqref{xyx=xyxx}}}=yt_iyxyt_jx^2\stackrel{\eqref{xyxty=xxyty}}\approx xt_iy^2xt_jy^2\approx yt_iyxt_jx.
\end{align*}
We rename the letters in these identities and obtain that the identity~\eqref{xzxyty=xzyxty} holds in $\mathbf V$, whence $\mathbf V\subseteq \mathbf I$.

Finally, suppose that $y\in\con_2(\mathbf v'\mathbf p\mathbf a)$. Then the identities
$$
\mathbf v'\,\mathbf p\mathbf a\,yx\,\mathbf b\,\mathbf v''\stackrel{\eqref{xyx=xyxx}}\approx \mathbf v'\,\mathbf p\mathbf a\,y^2x\,\mathbf b\,\mathbf v''\stackrel{\eqref{xyxty=xxyty}}\approx \mathbf v'\,\mathbf p\mathbf a\,yxy\,\mathbf b\,\mathbf v''\stackrel{\eqref{xyxztx=xyxzxtx}}\approx \mathbf v'\,\mathbf p\mathbf a\,xy\,\mathbf b\,\mathbf v''=\mathbf v
$$ 
hold in the variety $\mathbf J$. We obtain a contradiction with the fact that this variety violates the identity~\eqref{..ayxb..=..axyb..}. So, we have proved $\mathbf V\subseteq \mathbf I$.

\smallskip

Suppose now that $\mathbf V\subset\mathbf I$. Then we can believe that the identity~\eqref{..ayxb..=..axyb..} does not hold in the variety $\mathbf I$. We will prove that $\mathbf V\subseteq\mathbf H$. Since the identity~\eqref{xzxyty=xzyxty} allows us to swap a non-latest occurrence and a non-first occurrence of two multiple letters whenever these occurrences are adjacent to each other, we have $y\notin\con(\mathbf v'\mathbf p\mathbf a)$. Then we can verify that $\mathbf V$ satisfies the identity~\eqref{xyxty=yxxty} (see the second paragraph of Case~2), and therefore, is contained in $\mathbf H$.

\smallskip

Suppose now that $\mathbf V\subset\mathbf H$. Then we can believe that the identity~\eqref{..ayxb..=..axyb..} does not hold in the variety $\mathbf H$. We will prove that $\mathbf V=\mathbf F\vee\overleftarrow{\mathbf E}$. Arguments similar to those from the previous paragraph imply that $y\notin\con(\mathbf v'\mathbf p\mathbf a)$. If the first and the second occurrence of $y$ in $\mathbf v$ lie in the same block then $\mathbf b=\mathbf b'y\mathbf b''$ for some words $\mathbf b'$ and $\mathbf b''$. Then we have:
\begin{align*}
\mathbf v'\mathbf p\mathbf ayx\mathbf b' y\mathbf b''\mathbf v''&{}\approx\mathbf v'\mathbf p\mathbf ay^2x\mathbf b'\mathbf b''\mathbf v''&&\textup{by Lemma~\ref{w=v_1aav_2v_3}}\\
&\approx \mathbf v'\mathbf p\mathbf ayxy\mathbf b'\mathbf b''\mathbf v''&&\textup{by the identity~\eqref{xyxty=xxyty}}\\
&\approx \mathbf v'\mathbf p\mathbf axy^2\mathbf b'\mathbf b''\mathbf v''&&\textup{by the identity~\eqref{xyxty=yxxty}}\\
&\approx \mathbf v'\mathbf p\mathbf axy\mathbf b'y\mathbf b''\mathbf v''.&&\textup{by Lemma~\ref{w=v_1aav_2v_3}}.
\end{align*}
This contradicts the fact that the variety $\mathbf H$ violates the identity~\eqref{..ayxb..=..axyb..}. Therefore, the first and the second occurrences of $y$ in $\mathbf v$ lie in different blocks. Then there is $i<k<j$ such that $\ell_1(\mathbf v, t_k)<\ell_2(\mathbf v,y)$. In view of Corollary~\ref{word problem F vee dual to E}, $\ell_1(\mathbf u, t_k)<\ell_2(\mathbf u,y)$. Then $\mathbf V$ satisfies the identities
$$
xyt_kyt_jx\!\stackrel{\eqref{xyx=xyxx}}\approx\! xyt_ky^2t_jx^2\! =\! \mathbf u(x,y,t_i,t_j)\! \approx\!  \mathbf v(x,y,t_i,t_j)\! =\! yxt_ky^2t_jx^2\!\stackrel{\eqref{xyx=xyxx}}\approx\! yxt_kyt_jx.
$$
It follows that the identity~\eqref{xyzxty=yxzxty} holds in $\mathbf V$. Hence $\mathbf V=\mathbf F\vee\overleftarrow{\mathbf E}$ by Lemma~\ref{basis for F vee dual E}.

Proposition~\ref{L(J)} is proved.
\end{proof}

\section{Proof of Theorem~\ref{main result}}
\label{proof of main result}

Put  
\begin{align*}
\mathbf u_n[p,\ell_1,\ell_2,\dots,\ell_n]=xz_1z_2\cdots z_nx^p\, \biggl(\,\prod_{i=1}^n t_iz_i^{\ell_i}\biggr)
\end{align*}
for every positive integers $n$, $p$, $\ell_1,\ell_2,\dots,\ell_n$. We note that $\mathbf u_n[1,1,\dots,1]=\mathbf w_n[\varepsilon]$ where $\varepsilon$ denotes the identity element of $S_n$.

The following statement is evident.

\begin{remark}
\label{subwords of u_n}
Let $\mathbf v=\mathbf u_n[p,\ell_1,\ell_2,\dots,\ell_n]$ for some natural numbers $n$, $p$, $\ell_1,\ell_2,\dots,\ell_n$. If $a,b\in\con(\mathbf v)$ and $a\ne b$ then the subword $ab$ of the word $\mathbf v$ has exactly one occurrence in this word.\qed{\sloppy

}
\end{remark}

\begin{proof}[Proof of Theorem~\ref{main result}.]
It follows from~\cite[Proposition~3.1]{Sapir-87} that the variety $\mathbf J$ is locally finite. Since the variety $\mathbf J$ is small by Proposition~\ref{L(J)}, Lemma~\ref{small+LF=>FG} implies that this variety is finitely generated.

In view of Proposition~\ref{L(J)}, the varieties $\mathbf T$, $\mathbf{SL}$, $\mathbf C$, $\mathbf D$, $\mathbf E$, $\overleftarrow{\mathbf E}$, $\mathbf E\vee\overleftarrow{\mathbf E}$, $\mathbf F$, $\mathbf F\vee\overleftarrow{\mathbf E}$, $\mathbf H$, $\mathbf I$ are the only proper subvarieties of $\mathbf J$. The varieties $\mathbf E\vee\overleftarrow{\mathbf E}$ and $\mathbf F\vee\overleftarrow{\mathbf E}$ are finitely based by Lemmas~\ref{basis for E vee dual E} and~\ref{basis for F vee dual E} respectively. The remaining of the listed varieties are finitely based by their definitions.

We note that to prove that every proper subvariety of $\mathbf J$ is finitely based, one does not need to describe the whole lattice $L(\mathbf J)$. It is sufficient to establish only the fact that $\mathbf I$ is the unique maximal subvariety of $\mathbf J$. Indeed, it is proved in~\cite[Theorem~1.1(i)]{Lee-12} that every monoid variety that satisfies the identities~\eqref{xzytxy=xzytyx} and~\eqref{xyzxty=yxzxty} is finitely based. Obviously, the identities~\eqref{xzytxy=xzytyx} and~\eqref{xyzxty=yxzxty} hold in $\mathbf I$. Taking into account the fact that $\mathbf I$ is the unique maximal subvariety of $\mathbf J$, we obtain that every proper subvariety of $\mathbf J$ is finitely based. We have decided to describe the lattice $L(\mathbf J)$ to prove the fact that the variety $\mathbf J$ is limit, because this description is of certain independent interest and may be useful in further research.

So, it remains to verify that $\mathbf J$ is non-finitely based. Arguing by contradiction, we suppose that $\mathbf J$ has a finite basis of identities $\Sigma$. Let $k$ be a maximum of length of left-hand or right-hand sides of the identities from $\Sigma$. We are going to verify that if $n>k$ then the identity system $\Sigma$ does not imply the identity $\mathbf w_n[\varepsilon]\approx\mathbf w_n'[\varepsilon]$. To establish this fact, it suffices to prove that if an identity $\mathbf u_n[p,k_1,k_2,\dots,k_n]\approx \mathbf w$ follows from the identity system $\Sigma$ then $\mathbf w=\mathbf u_n[q,\ell_1,\ell_2,\dots,\ell_n]$ for some natural numbers $q,\ell_1,\ell_2,\dots,\ell_n$. Put $\mathbf v=\mathbf u_n[p,k_1,k_2,\dots,k_n]$.  By Lemma \ref{deduction} and induction, we can reduce our considerations to the case when $\mathbf v= \mathbf a\xi(\mathbf s)\mathbf b$ and $\mathbf w= \mathbf a\xi(\mathbf t)\mathbf b$ for some $\mathbf a,\mathbf b\in F^1$, $\xi\in\End(F^1)$ and $\mathbf s\approx \mathbf t\in\Sigma$. We can assume without loss of generality that the words $\mathbf v$ and $\mathbf w$ are different.

In view of Corollary~\ref{word problem F vee dual to E}, 
$$
\mathbf w =\mathbf a\xi(\mathbf t)\mathbf b=\mathbf u\, \biggl(\,\prod_{i=1}^n t_iz_i^{\ell_i}\biggr)
$$
for some natural numbers $\ell_1,\ell_2,\dots,\ell_n$ and some word $\mathbf u$ such that $\simple(\mathbf u)=\{z_1,z_2,\dots,z_n\}$ and $\mul(\mathbf u)=\{x\}$. Lemma~\ref{w = ... in H,I,J}(iii) implies that $\mathbf u(z_1,z_2,\dots,z_n)=z_1z_2\cdots z_n$. Besides that, the first occurrence of $x$ in $\mathbf u$ precedes the first occurrence of $z_1$ in $\mathbf u$ by Lemma~\ref{w = ... in H,I,J}(ii). 

If the word $xz_1z_2\cdots z_n$ is a prefix of the word $\mathbf a$ then the required conclusion is evident. Suppose now that $\mathbf a=xz_1z_2\cdots z_i$ for some $0\le i<n$ (if $i=0$ then we mean that $\mathbf a=x$). Then
$$
\xi(\mathbf s)\mathbf b=z_{i+1}z_{i+2}\cdots z_nx^p\, \biggl(\,\prod_{i=1}^n t_iz_i^{k_i}\biggr)\ \text{ and }\ \xi(\mathbf t)\mathbf b=\mathbf u'\, \biggl(\,\prod_{i=1}^n t_iz_i^{\ell_i}\biggr)
$$
for some suffix $\mathbf u'$ of the word $\mathbf u$. Since the identity $\xi(\mathbf s)\mathbf b\approx \xi(\mathbf t)\mathbf b$ holds in $\mathbf J$, the inclusion $\mathbf F\vee\overleftarrow{\mathbf E}\subset \mathbf J$ and Corollary~\ref{word problem F vee dual to E} imply that $\con(\mathbf u')=\{x,z_{i+1},z_{i+2},\dots,z_n\}$. In view of Lemma~\ref{w = ... in H,I,J}(iii), $\mathbf u'(z_{i+1},z_{i+2},\dots,z_n)=z_{i+1}z_{i+2}\cdots z_n$. If $p=1$ then $\mathbf u'(x,z_n)=z_nx$ by Corollary~\ref{word problem F vee dual to E}, and we obtain a contradiction with the inequality $\mathbf v\ne\mathbf w$. If $p>1$ then Lemma~\ref{w = ... in H,I,J}(ii) implies that $\mathbf u'(x,z_n)=z_nx^q$ for some $q>1$, whence $\mathbf w=\mathbf u_n[q,\ell_1,\ell_2,\dots,\ell_n]$.

Finally, suppose that $\mathbf a=\lambda$. If $\xi(\mathbf s)=xz_1z_2\cdots z_j$ for some $0\le j\le n$ then the letters $x,z_1,z_2,\dots, z_j$ are the images of simple letters of the word $\mathbf s$. Then $\xi(\mathbf t)=xz_1z_2\cdots z_j$ by Corollary~\ref{word problem F vee dual to E}. Therefore, we can assume that the word $xz_1z_2\cdots z_nx$ is a prefix of $\xi(\mathbf s)$. 

Let $e_0\mathbf s_0e_1\mathbf s_1\cdots e_m\mathbf s_m$ be the decomposition of the word $\mathbf s$. Then the decomposition of the word $\mathbf t$ has the form $e_0\mathbf t_0e_1\mathbf t_1\cdots e_m\mathbf t_m$ by the inclusion $\mathbf E\subset\mathbf J$ and Corollary~\ref{decompositions of u and v in E}. Remark~\ref{subwords of u_n} and the fact that the length of the word $\mathbf s$ is less than $n$ imply that there is $j\in\{1,2,\dots n\}$ such that $z_j\in\con(\xi(e_i))$ for some $i\in\{1,2,\dots,m\}$. We can assume that $j$ is the greatest number with this property. Let $j'\le j$ be the least number with $z_{j'}\in\con(\xi(e_i))$. Then $xz_1z_2\cdots z_{j'-1}=\xi(e_0\mathbf s_0e_1\mathbf s_1\cdots e_{i-1}\mathbf s_{i-1})$ and the word $z_{j'}z_{j'+1}\cdots z_j$ is a prefix of $\xi(e_i)$. In view of Corollary~\ref{word problem F vee dual to E}, we have that $xz_1z_2\cdots z_{j'-1}=\xi(e_0\mathbf t_0e_1\mathbf t_1\cdots e_{i-1}\mathbf t_{i-1})$. If $j=n$ then we obtain the required conclusion. So, we can assume that $j<n$. Then $z_n\in\con(\xi(a))$ for some $a\in\mul(\mathbf s)$. It follows that there is $r$ such that $i<r$, $\ell_1(\mathbf s, e_r)<\ell_2(\mathbf s, a)$ and $t_1\in\con(\xi(e_r))$. In view of Remark~\ref{subwords of u_n}, $\xi(a)=z_n$. 

Suppose that $\ell_2(\mathbf u,x)<\ell_1(\mathbf u,z_n)$. Clearly, $\ell_1(\mathbf u,z_j)<\ell_2(\mathbf u,x)$. Then there is $b\in\mul(\mathbf t)=\mul(\mathbf s)$ such that $x\in\con(\xi(b))$. Remark~\ref{subwords of u_n} implies that $\xi(b)=x^h$ for some $h\in\mathbb N$. Clearly, $b$ does not occur in the words $\mathbf s_r,\mathbf t_r,\mathbf s_{r+1},\mathbf t_{r+1},\dots, \mathbf s_m,\mathbf t_m$. If $\ell_1(\mathbf s,a)<\ell_1(\mathbf s,b)$ then $\mathbf J$ satisfies the identities
$$
ab^{f_1}e_ra^{f_2}=\mathbf s(a,b,e_r)\approx \mathbf t(a,b,e_r)=b^{g_1}ab^{g_2}e_ra^{g_3}
$$
for some $f_2,g_1,g_3\in \mathbb N$, $f_1\ge 2$ and $g_2\ge 0$. This contradicts Lemma~\ref{w = ... in H,I,J}(ii). If $\ell_1(\mathbf s,b)<\ell_1(\mathbf s,a)$ then $b\in\simple(\mathbf s_0\mathbf s_1\cdots \mathbf s_{i-1})$. Taking into account Corollary~\ref{word problem F vee dual to E}, we obtain that $b\in\simple(\mathbf t_0\mathbf t_1\cdots \mathbf t_{i-1})$. Then $\mathbf J$ satisfies the identities
$$
be_iab^{f_1}e_ra^{f_2}=\mathbf s(a,b,e_i,e_r)\approx \mathbf t(a,b,e_i,e_r)=be_ib^{g_1}ab^{g_2}e_ra^{g_3}
$$
for some $f_1,f_2,g_1,g_3\in \mathbb N$ and $g_2\ge 0$. A contradiction with Lemma~\ref{w = ... in H,I,J}(i). Therefore, $\ell_1(\mathbf u,z_n)<\ell_2(\mathbf u,x)$. This implies that $\mathbf u=xz_1z_2\cdots z_n x^q$ for some $q$.
\end{proof}

\begin{proof}[Proof of Corollary~\ref{J is Cross}.]
The variety $\mathbf J$ is non-Cross by Theorem~\ref{main result}. Let $\mathbf X$ be a proper subvariety of $\mathbf J$. In view of Theorem~\ref{main result}, $\mathbf X$ is finitely based. According to Proposition~\ref{L(J)}, $\mathbf X$ is small. Then $\mathbf X$ is finitely generated by Lemma~\ref{small+LF=>FG}. So, $\mathbf X$ is a Cross variety.
\end{proof}

\subsection*{Acknowledgments.} The author is sincerely grateful to Professor Boris Vernikov for his attention and assistance in the writing of the article, to Dr. Edmond W.H. Lee for offering  of a shorter proof of Theorem~\ref{main result} and his numerous valuable suggestions for improving the manuscript and to the anonymous referee
for several useful remarks.

\small

\end{document}